\documentclass{article}

\usepackage[english]{babel}

\usepackage[a4paper,top=2cm,bottom=2cm,left=3cm,right=3cm,marginparwidth=1.75cm]{geometry}

\usepackage{amsmath, amssymb}
\usepackage{graphicx}
\usepackage[colorlinks=true, allcolors=blue]{hyperref}
\usepackage{mathtools}
\usepackage{amsfonts}
\usepackage{amsthm}
\usepackage{tikz-cd} 

\newtheorem{thm}{Theorem}[section]

\newtheorem{lem}[thm]{Lemma}
\newtheorem{prop}[thm]{Proposition}
\newtheorem{rem}[thm]{Remark}
\newtheorem{exam}[thm]{Example}
\numberwithin{equation}{section}
\newcommand{\mycomment}[1]{}
\def\dashmapsto{\mapstochar\dashrightarrow}

\newcommand{\ZZ}{\mathbb Z}
\newcommand{\CC}{\mathbb C}
\newcommand{\PP}{\mathbb P}

\newcommand{\lra}{\longrightarrow}

\newcommand{\cA}{\mathcal{A}}

\newcommand{\tC}{\Tilde{C}}

\newcommand{\cE}{\mathcal{E}}

\newcommand{\s}{\sigma}
\newcommand{\st}{\sigma\tau}
\newcommand{\ee}{\mathcal{E}_2(2,2)}

\DeclareMathOperator{\Aut}{Aut}
\DeclareMathOperator{\id}{id}
\DeclareMathOperator{\Fix}{Fix}
\DeclareMathOperator{\im}{Im}
\DeclareMathOperator{\Nm}{Nm}

\title{Hyperelliptic genus 3 curves with involutions and a Prym map}
\author{Pawe\l{} Bor\'owka and Anatoli Shatsila}
\begin{document}
\maketitle
\begin{abstract}
We characterise genus 3 complex smooth hyperelliptic curves that contain two additional involutions as curves that can be build from five points in $\PP^1$ with a distinguished triple.
We are able to write down explicit equations for the curves and all their quotient curves.

We show that, fixing one of the elliptic quotient curve, the Prym map becomes a 2:1 map and therefore the hyperelliptic Klein Prym map, constructed recently by the first author with A. Ortega, is also 2:1 in this case. 

As a by-product we show an explicit family of $(1,d)$ polarised abelian surfaces (for $d>1$), such that any surface in the family satisfying a certain explicit condition is abstractly non-isomorphic to its dual abelian surface. 
\end{abstract}

\section{Introduction}
Curves with involutions are interesting objects to study. The existence of automorphisms provides a way to find models of curves (e.g. equations for hyperelliptic curves) or periods of Jacobians (e.g. \cite[Section 11.7]{BL}). Moreover, involutions enable to consider (quotient) coverings that are central constructions in the Prym theory.

In the paper, we consider hyperelliptic curves of genus 3 that contain two additional involutions. The motivation to study them came from the Klein covering construction that has been described in \cite{BO17, BO19} and \cite{BO23}. A covering is called Klein if it is Galois 4:1 covering with the deck group being isomorphic to the Klein four-group. A covering is called hyperelliptic if both top and bottom curves are hyperelliptic. In \cite{BO23} the authors show that the Prym map of the hyperelliptic Klein covering is injective in all non-trivial cases (i.e. when genera of curves in the construction increases). 

The covering that has been left in \cite{BO23} is the Klein covering of hyperelliptic genus 3 curve over an elliptic curve, called baby mixed case. It is because in such a case, one has a tower of curves $C\to E'\to E$, where $E,E'$ are elliptic, so $P(C/E)=P(C/E')$, i.e. the Klein Prym variety equals the classical Prym variety. The main result of the paper is the following theorem.

\begin{thm}[Theorem \ref{thmPrym2}]
Let $\mathcal{R}^H_{1,4}$ be a moduli space of hyperelliptic Klein coverings of elliptic curves branched in 4 points and let $\mathcal{E}_d(d,d)$ be the moduli space of $(1,d)$ polarised abelian surfaces containing a pair of complementary elliptic curves of exponent $d$.

Then the considered Prym map $Pr:\mathcal{R}^H_{1,4}\lra \mathcal{E}_2(2,2)$ is finite, dominant, not surjective map of generic degree two. 
\end{thm}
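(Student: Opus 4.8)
The plan is to leverage the explicit parametrisation of curves in $\mathcal{R}^H_{1,4}$ by five points of $\PP^1$ with a distinguished triple, together with the explicit equations for $C$ and all of its quotients obtained earlier, in order to compute the Prym variety $P(C/E')=P(C/E)$ and to control the fibres of $Pr$. First I would fix dimensions and irreducibility: the source is parametrised by five points modulo $\mathrm{PGL}_2(\CC)$ (the distinguished triple being discrete data), hence is an irreducible surface; on the target side a point of $\mathcal{E}_2(2,2)$ is, up to finite data, a pair of complementary elliptic curves — the surface being isogenous to their product — so $\mathcal{E}_2(2,2)$ is likewise an irreducible surface. Thus a generically finite map is the expected behaviour, and the task reduces to (a) checking $Pr$ is a well-defined morphism into $\mathcal{E}_2(2,2)$, (b) computing its generic degree, and (c) locating its image.

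For (a) I would read off from the explicit periods that $P(C/E')$ carries a $(1,2)$-polarisation and that the images in it of the two elliptic quotients of $C$ form a complementary pair of exponent $2$, so that $Pr$ indeed lands in $\mathcal{E}_2(2,2)$; holomorphic dependence on the parameters is immediate from the formulas. The heart of the argument is (b), and here I would reconstruct the generic fibre. Starting from a pair of complementary elliptic curves $(E_1,E_2)$ inside a generic $A\in\mathcal{E}_2(2,2)$ together with the polarisation and the exponent-$2$ datum, one recovers the relevant double covers and hence a candidate curve $C$ with its Klein structure. I expect exactly two solutions, and the cleanest way to see this is to exhibit a concrete involution on $\mathcal{R}^H_{1,4}$ — I would guess the swap of the two points lying outside the distinguished triple — and verify that it preserves the isomorphism class of $P(C/E')$ with its two elliptic curves, while conversely any two coverings with isomorphic Prym data are related by it. Equivalently, one may compute the differential of $Pr$ in the explicit period coordinates and check it is generically an isomorphism, which forces the generic fibre to be finite, and then count points directly.

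The delicate point, and where I expect the main obstacle, is tracking the polarisation together with the exponent-$2$ gluing datum through this reconstruction: it is precisely this extra structure that rigidifies the problem enough to pin the count at $2$ rather than something larger, and losing sight of it would either inflate the fibre or make the candidate involution fail to preserve the full Prym datum. Once the generic degree is established to be $2$, finiteness and dominance follow formally: a map between irreducible surfaces with a two-point generic fibre is generically finite and dominant, so the image is dense, and one checks that no curve in the source is contracted, giving finiteness of the fibres (here ``finite'' in the sense of finite fibres, since the moduli spaces are not complete).

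Finally, for non-surjectivity I would exhibit an explicit proper closed locus of $\mathcal{E}_2(2,2)$ that is not met by $Pr$, detected by the reconstruction in step (b) breaking down — for instance the surfaces for which the candidate curve $C$ degenerates, becoming singular or having its distinguished triple collapse (equivalently, forcing a coincidence among the five points). Such surfaces lie in the closure of the dense image but require a covering outside $\mathcal{R}^H_{1,4}$, so they are genuinely omitted; this simultaneously confirms dominance, the failure of surjectivity, and the compatibility of these three properties with the map being only quasi-finite rather than proper.
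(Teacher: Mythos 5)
Your overall skeleton (well-definedness, matching dimensions, a fibre count, an explicit example for non-surjectivity) has the right shape, but the core step --- showing that the generic fibre of $Pr$ has exactly two elements --- is not actually carried out, and neither of your two suggested routes closes it. The involution route begs the question: exhibiting an involution $\varphi$ with $Pr\circ\varphi=Pr$ only bounds the degree from below, and the clause ``conversely any two coverings with isomorphic Prym data are related by it'' is precisely the hard half, for which you give no method. Moreover, your guessed involution --- swapping the two points outside the distinguished triple, i.e.\ $a\leftrightarrow b$ in the normal form $y^2=(x^4+ax^2+1)(x^4+bx^2+1)$ --- is the identity on moduli, since that pair of points is unordered and the marked involution is attached to the triple; the true deck involution must move a point of the distinguished triple (compare the explicit formula in Proposition \ref{inv}, which sends $[2:1]$ to $[-2-a-b:1]$). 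The differential route likewise ends with ``count points directly'' without a counting mechanism, and the proposed reconstruction of $C$ from the pair $(E_1,E_2)\subset A$ is not explained. The missing idea is Barth's duality theorem (Proposition \ref{bar}): $P$ is the Prym variety of a covering $C\to E$ in this family if and only if $C$ embeds into the dual surface $\widehat{P}$, so the fibre of $Pr$ over $P$ is identified with the set of smooth hyperelliptic genus $3$ curves on $\widehat{P}$, which again lies in $\mathcal{E}_2(2,2)$ by Proposition \ref{duality}. That set is then counted (Proposition \ref{twocurves}) via the three degree-two polarised isogenies from $\widehat{P}$ to principally polarised surfaces: each contributes a smooth curve exactly when its target is a Jacobian, and for a surface in $\mathcal{E}_2(2,2)$ one of the three targets is always a product of elliptic curves, leaving at most two and generically exactly two.

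On non-surjectivity, your instinct (find surfaces where every candidate curve degenerates) is correct in spirit, but ``dense image'' plus ``the reconstruction can break down somewhere'' does not prove non-surjectivity: a dominant quasi-finite map between irreducible surfaces can perfectly well be surjective. One must actually produce a surface $A$ for which \emph{all three} quotients $A/\langle x\rangle$, with $x$ ranging over the nonzero elements of $\ker\phi_H$, are polarised products of elliptic curves rather than Jacobians; the paper does this in Example \ref{surj} with $A=(E\times E)/\langle(e,e)\rangle$ for $E\colon y^2=x^3+x$, exploiting the order-four automorphism to identify all three quotients as products. Without such an explicit computation the non-surjectivity claim remains unsupported.
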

Such result is a little surprising because on the one hand the classical Prym maps of small genus curves are usually not finite and on the other hand all other hyperelliptic Klein Prym maps are injective.

The main idea of the proof is to show that there are precisely two smooth hyperelliptic genus 3 curves on a general abelian surface that belongs to $\mathcal{E}_2(2,2)$.
In order to obtain the above result, we describe the moduli space of genus 3 hyperelliptic curves with two additional commuting involutions in Theorem \ref{moduli}. It turns out that they can be encoded as the 5-tuple of points in $\PP^1$ with the distinguished triple (up to projective equivalence). 

As a by-product of some lemmas needed for the main result, we have been able to show that most of abelian surfaces that belong to $\mathcal{E}_d(d,d)$ for $d>1$ are not isomorphic to their duals:
\begin{thm}[Theorem \ref{dual}]
Let $E,F$ be non-isogenous elliptic curves with $P\in E[d]$ and $Q \in F[d]$ primitive $d$-torsion points. Assume that $E/P$ is not isomorphic to $E$ or $F/Q$ is not isomorphic to $F$. Then $E\times F/\langle(P,Q)\rangle$ (of type $(1,d)$) is not (abstractly) isomorphic to its dual.
\end{thm}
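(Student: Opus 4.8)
The plan is to reduce the statement to a comparison of the elliptic subvarieties contained in $A := (E\times F)/\langle(P,Q)\rangle$ and in its dual $\hat A$. Writing $\pi\colon E\times F\to A$ for the quotient isogeny, note that $\langle(P,Q)\rangle\cong\ZZ/d$ (both $P,Q$ have order exactly $d$) and meets each factor trivially: if $n(P,Q)\in E\times\{0\}$ then $nQ=0$, so $d\mid n$, whence $nP=0$ as well. Thus $\pi$ restricts to an isomorphism on $E\times\{0\}$ and on $\{0\}\times F$, so $A$ contains copies of $E$ and of $F$ as elliptic subvarieties, and $A$ is isogenous to $E\times F$; dually $\hat A$ is isogenous to $\widehat{E\times F}\cong E\times F$ as well. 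The crux will be to pin down the elliptic subvarieties of $\hat A$ up to isomorphism. Because the theorem concerns \emph{abstract} (unpolarised) isomorphism, it suffices to compare these finite lists of isomorphism classes.

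Next I would compute the elliptic \emph{quotients} of $A$, since the abelian subvarieties of $\hat A$ are exactly the duals of quotient abelian varieties of $A$ (dualise the short exact sequences $0\to C\to A\to A/C\to 0$). Writing $F$ also for the image of $\{0\}\times F$ in $A$, this image together with $(P,Q)$ generates the image of $\langle P\rangle\times F$, giving
\begin{equation*}
A/F\;\cong\;(E\times F)/(\langle P\rangle\times F)\;\cong\;E/\langle P\rangle,
\end{equation*}
and symmetrically $A/E\cong F/\langle Q\rangle$. Dualising, and using that an elliptic curve is canonically isomorphic to its dual, the two elliptic subvarieties of $\hat A$ are isomorphic to $E/\langle P\rangle$ and $F/\langle Q\rangle$.

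Finally I would invoke a uniqueness lemma: an abelian surface isogenous to a product of two non-isogenous elliptic curves $E_1,E_2$ contains exactly two elliptic subvarieties, one in each isogeny class, each of them unique. (If two distinct subvarieties were isogenous to $E_1$, their sum would be the whole surface, forcing an isogeny $E_1\times E_1\sim E_1\times E_2$, impossible; and every elliptic subvariety is isogenous to $E_1$ or $E_2$ by uniqueness of the decomposition into simple factors up to isogeny.) Applying this to $A$ and to $\hat A$: the surface $A$ has exactly the two elliptic subvarieties $E$ and $F$, while $\hat A$ has exactly $E/\langle P\rangle$ (isogenous to $E$) and $F/\langle Q\rangle$ (isogenous to $F$). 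Any abstract isomorphism $A\xrightarrow{\sim}\hat A$ must carry the unique elliptic subvariety of $A$ lying in the isogeny class of $E$ onto the unique one of $\hat A$ in that class, forcing $E\cong E/\langle P\rangle$, and likewise $F\cong F/\langle Q\rangle$. Taking the contrapositive yields the theorem.

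I expect the main obstacle to be the clean identification of the elliptic subvarieties of $\hat A$: getting the duality bookkeeping right (subvarieties of $\hat A$ as duals of quotients of $A$, and the identification $\widehat{E/\langle P\rangle}\cong E/\langle P\rangle$), and then ensuring the isogeny-class matching is \emph{forced} rather than merely possible. The uniqueness lemma is precisely what makes the matching rigid and lets the argument bypass all polarisation data, which is essential since "abstractly isomorphic" ignores the polarisation.
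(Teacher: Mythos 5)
Your proof is correct, and its overall skeleton matches the paper's: exhibit $E/\langle P\rangle$ as an elliptic subvariety of $\hat A$, note it is isogenous to $E$ but (by hypothesis) not isomorphic to it and not isogenous to $F$, and conclude via the fact that $A$ contains no elliptic curves other than $E$ and $F$ (your ``uniqueness lemma'' is exactly the paper's Proposition~\ref{noniso}). Where you differ is in how the containment $E/\langle P\rangle\hookrightarrow\hat A$ is obtained. The paper writes $\hat A=A/\ker\phi_H$ and proves in Lemma~\ref{lemintersections}, via $\ker\phi_H=\mu((\ker\mu)^{\perp})$ and an explicit symplectic-pairing computation, that $E\cap\ker\phi_H=\langle P\rangle$, so that $E/\langle P\rangle$ embeds into the quotient. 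You instead dualise the exact sequence $0\to F\to A\to A/F\to 0$, compute $A/F\cong E/\langle P\rangle$ directly from the description of $\ker\pi$, and use the canonical self-duality of elliptic curves. Your route is polarisation-free and somewhat cleaner for this isolated statement; the paper's route has the advantage that the intermediate Lemma~\ref{lemintersections} ($E\cap F=\langle P\rangle\subset\ker\phi_H$, etc.) is reused later, e.g.\ in Proposition~\ref{duality} and in the analysis of the Prym map. Both arguments close the same way, by forcing an isomorphism $A\cong\hat A$ to match the unique elliptic subvariety in each isogeny class.
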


The existence of the involutions implies that the Jacobian of the genus 3 curve is completely decomposable i.e., isogenous to the product of three quotient elliptic curves. Such curves are classically interesting examples, since the decomposition allows us to understand their periods in terms of periods of curves of smaller genera, especially in terms of elliptic curves, see \cite{ES, KR, Paulhus, Paulhus2, eq}. In particular, genus 3 curves with completely decomposable Jacobians have recently gained a lot of attention \cite{HLP, BO19, Katsura, MK}.
Therefore, in Proposition \ref{eqns} we show explicit equations of all the curves appearing in the construction. Since the Prym map is generically 2:1, we are able to write an explicit (rational) deck involution for the Prym map, see Proposition \ref{inv}.

The structure of the paper is as follows. Section 2 is devoted to the theory of non-simple abelian surfaces and the result of dual surfaces being non-isomorphic is proved there. In Section 3 we describe the moduli space and prove the main result concerning the Prym map. 
The last section of the paper is devoted to finding equations of the curves that appear in the construction and describing the deck involution. Moreover, we provide a construction of a period matrix of the Prym variety in terms of periods of elliptic curves. We conclude the paper with some problems for future work.

\subsection*{Acknowledgements}
Some results presented in this article are a part of the second author’s Master Thesis at Jagiellonian University in Kraków. Both authors have been supported by the Polish National Science Center project number 2019/35/D/ST1/02385.
During the preparation of this article
Anatoli Shatsila was a participant of the tutoring programme under the
Excellence Initiative at the Jagiellonian University.



\section{Non-simple abelian surfaces}
We would like to start by recalling some results from the theory of abelian surfaces. Let $\cA_2(1,d)$ be the moduli of $(1,d)$ polarised abelian surfaces and $\cA_2=\cA_2(1,1)$ be the moduli of principally polarised ones. Recall from \cite{BL} and \cite{B16} that for a principally polarised abelian surface we have the following:
\begin{prop}
    Let $A\in\cA_2$ be a principally polarised abelian surface that contains an elliptic curve $E$ of exponent 2. Then there exists a complementary elliptic curve $F$ also of exponent 2 such that the addition map $E\times F\to A$ is a polarised isogeny of degree 4. Note that $E\cap F=E[2]=F[2]\subset A$. 

    Moreover, the locus of principally polarised abelian surfaces containing an elliptic curve of exponent 2 is an irreducible surface in $\cA_2$ that is an image of a so- called singular relation of discriminant $4$.
\end{prop}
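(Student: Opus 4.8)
The statement naturally splits into a structural claim (existence of $F$ and the degree of the isogeny) and a moduli-theoretic claim (irreducibility of the locus), and I would treat them separately. For the first part the plan is to invoke the norm endomorphism $N_E\in\mathrm{End}(A)$ attached to the inclusion $E\hookrightarrow A$. By definition of the exponent it satisfies $N_E|_E = 2\,\id_E$ and $N_E^2 = 2N_E$, so I would set $F := (\ker N_E)^0$, the complementary abelian subvariety. Since $\dim A = 2$ and $\dim E = 1$, the complement $F$ is again $1$-dimensional, hence an elliptic curve. Here I would lean on the symmetry of the complementary construction in a principally polarised abelian variety: the endomorphism $2\,\id_A - N_E$ is the norm endomorphism of $F$, which forces $F$ to share the exponent $2$ of $E$. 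This is the point where I would cite \cite{BL} and \cite{B16} rather than reprove.

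To pin down the degree of $\mu\colon E\times F\to A$, I would use the defining property of complementary subvarieties, namely $\mu^*L\cong p_E^*(L|_E)\otimes p_F^*(L|_F)$, which already exhibits $\mu$ as a polarised isogeny. Comparing Euler characteristics gives $\chi(\mu^*L)=\deg(\mu)\,\chi(L)=\deg(\mu)$ because $L$ is principal, whereas the right-hand side is a product polarisation of type $(2,2)$, so its Euler characteristic is $4$; hence $\deg\mu = 4$. For the intersection I would note $\ker\mu\cong\{(x,-x):x\in E\cap F\}\cong E\cap F$, so $|E\cap F| = 4$. Since $N_E$ acts as $0$ on $F$ and as multiplication by $2$ on $E$, every point of $E\cap F$ is $2$-torsion, giving $E\cap F\subseteq E[2]\cap F[2]$; comparing orders ($|E[2]| = |F[2]| = 4$) then yields $E\cap F = E[2] = F[2]$.

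For the second assertion I would pass to period matrices $\tau = \begin{pmatrix}\tau_{11} & \tau_{12}\\ \tau_{12} & \tau_{22}\end{pmatrix}\in\mathfrak{H}_2$ and translate the condition ``$A$ contains an elliptic curve of exponent $2$'' into the existence of a primitive singular relation $a\tau_{11}+b\tau_{12}+c\tau_{22}+d(\tau_{12}^2-\tau_{11}\tau_{22})+e = 0$ with $(a,b,c,d,e)\in\ZZ^5$ of discriminant $b^2-4ac-4de = 4$; this identifies the locus with the Humbert surface $H_4\subset\cA_2$. Irreducibility would then follow from the transitivity of the $\mathrm{Sp}_4(\ZZ)$-action on primitive singular relations of a fixed discriminant, realising $H_4$ as the image of a single modular locus and hence as an irreducible surface of codimension $1$ in the threefold $\cA_2$.

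The routine parts are the norm-endomorphism bookkeeping and the Euler-characteristic degree count, which are mechanical once the machinery of \cite{BL} is in place. The genuine obstacle is the moduli statement: matching the geometric condition to discriminant \emph{exactly} $4$ (rather than a proper multiple, which would correspond to a non-primitive relation or a larger exponent) and establishing $\mathrm{Sp}_4(\ZZ)$-transitivity on discriminant-$4$ relations, which rests on reduction theory for the associated quadratic data. Since the proposition is phrased as a recollection, I would in the end cite \cite{BL} and \cite{B16} for this Humbert-surface input and only spell out the dictionary between elliptic subcurves of exponent $2$ and singular relations of discriminant $4$.
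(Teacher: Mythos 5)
Your proposal is correct and follows essentially the same route as the paper, which simply cites \cite[Cor.~12.1.2 and 12.1.4]{BL} for the existence of the complementary curve of equal exponent and the degree-$4$ addition map, and Humbert's theory via \cite[Thm~4]{B16} for the irreducibility of the discriminant-$4$ Humbert surface. Your added details (the norm-endomorphism argument showing $E\cap F\subseteq E[2]$, the Euler-characteristic degree count, and the dictionary with primitive singular relations) are accurate expansions of exactly those cited results.
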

\begin{proof}
    The first part is proved in \cite[Cor 12.1.2 and 12.1.4]{BL}. The second part is essentially due to Humbert, see \cite[Thm 4]{B16}.
\end{proof}
Now, we would like to show a similar result for a $(1,2)$ polarised abelian surfaces.
Denote by
$$\mathcal{E}_2(2,2)=\{A\in\cA_2(1,2): A\text{ contains a complementary pair of elliptic curves of exponents } 2,2\}.$$

Recall the notation from \cite{BO17}: for an abelian variety $X$ and its abelian subvarieties $M_1,\ldots,M_k$ with associated symmetric idempotents $\epsilon_1,\ldots,\epsilon_k$, we write $$A=M_1\boxplus\ldots\boxplus M_k\ \text{ if }\ \epsilon_1+\ldots+\epsilon_k=1.$$
In particular, for a pair of complementary elliptic curves $E,F$, we often write $A=E\boxplus F$.
As usual, we also abuse notation by identifying an elliptic curve $E$ with its image under an embedding $E\to A$ unless we consider the product $A = E\times F$. 

The following lemma is a special case of \cite[Thm 5.2]{AB}. 
\begin{lem}
    The locus $\mathcal{E}_2(2,2)$ is a (non-empty) irreducible surface in $\cA_2(1,2)$.  
\end{lem}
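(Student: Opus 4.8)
The plan is to prove that $\mathcal{E}_2(2,2)$ is a non-empty irreducible surface in $\cA_2(1,2)$ by constructing an explicit parametrisation and analysing its image. Since the statement is claimed to be a special case of \cite[Thm 5.2]{AB}, the honest approach is to specialise that general result, but I would also want to understand it concretely. First I would make non-emptiness transparent: starting from two elliptic curves $E, F$ and primitive $2$-torsion points $P \in E[2]$, $Q \in F[2]$, one forms the quotient $A = (E \times F)/\langle (P,Q)\rangle$, which carries a natural $(1,2)$ polarisation and visibly contains the images of $E$ and $F$ as a complementary pair of elliptic curves of exponent $2$. This gives a surjection from pairs of elliptic curves with chosen $2$-torsion onto $\mathcal{E}_2(2,2)$, so the locus is non-empty.

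Next I would address the dimension and the surface claim. The coarse moduli of pairs $(E,F)$ of elliptic curves is two-dimensional (one modulus each), and the choice of primitive $2$-torsion points $P,Q$ is a finite (discrete) datum, so the construction above produces a family of dimension exactly $2$. The key point is that this family is \emph{not} contained in a proper subvariety and genuinely sweeps out a two-dimensional locus inside the three-dimensional $\cA_2(1,2)$; I would verify the generic fibre of the parametrisation $(E,F,P,Q) \mapsto A$ is finite, which follows because a general $A \in \mathcal{E}_2(2,2)$ contains only finitely many elliptic curves (its complementary pair is essentially rigid up to the finite symmetry of swapping $E$ and $F$ and the action on torsion). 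This pins the dimension at $2$, establishing the ``surface'' part.

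For irreducibility, the cleanest route is to exhibit $\mathcal{E}_2(2,2)$ as the image of an irreducible parameter space under a morphism. The natural parameter space is a suitable cover of $\cA_1 \times \cA_1$ (the product of two modular curves) recording the choice of primitive $2$-torsion on each factor. The modular curve parametrising elliptic curves with a chosen primitive $2$-torsion point is irreducible (it is a quotient of the upper half-plane by a congruence subgroup, hence connected), and a product of two irreducible varieties is irreducible; since the construction $(E,P,F,Q)\mapsto A$ is a morphism (or at least a dominant rational map onto $\mathcal{E}_2(2,2)$) defined on this irreducible source, its image is irreducible. I would take care that the symmetric role of $E$ and $F$ only introduces a finite identification and does not break irreducibility.

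The main obstacle I anticipate is making the ``image of an irreducible space'' argument fully rigorous at the level of moduli stacks/spaces rather than just on points: one must check that the assignment $A = (E\times F)/\langle(P,Q)\rangle$ together with its induced polarisation really defines a morphism of moduli spaces (or a family over the parameter space inducing a morphism to $\cA_2(1,2)$), and that the polarisation type is uniformly $(1,2)$ across the family. This is precisely the content packaged in \cite[Thm 5.2]{AB}, so in the write-up I would lean on that theorem for the formal statement and use the explicit construction above only to make non-emptiness and the dimension count concrete, remarking that the special case we need follows directly by taking both exponents equal to $2$.
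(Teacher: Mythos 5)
Your strategy is the same as the paper's: parametrise by pairs of elliptic curves with marked $2$-torsion, map to $\cA_2(1,2)$ via $(E,P,F,Q)\mapsto (E\times F)/\langle(P,Q)\rangle$, and conclude irreducibility from irreducibility of the (level) parameter space. The paper uses the level cover $X(2)$ of triples $(E,P,P')$ with a symplectic basis of $E[2]$ rather than your single marked point, but that is immaterial. Your treatment of the dimension count (generic finiteness of the fibres, since a general member of $\cE_2(2,2)$ contains only finitely many elliptic curves) is actually more explicit than the paper's, which leaves this implicit.

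The one genuine thin spot is that you assert, in a single clause, that the construction gives a \emph{surjection} onto $\cE_2(2,2)$. That surjectivity is the only substantive step in the paper's proof and it does not follow from the construction alone: without it, your argument only shows that \emph{some} irreducible locus sits inside $\cE_2(2,2)$, not that $\cE_2(2,2)$ itself is irreducible. The paper fills this in as follows: given $(A,H)\in\cE_2(2,2)$ with complementary pair $(E,F)$ of exponent $2$, Poincar\'e's Reducibility Theorem gives the addition map $\mu\colon (E\times F,\cO_E(2)\boxtimes\cO_F(2))\to (A,H)$ as a polarised isogeny of degree $2$, and by \cite[Prop.~5.3.11]{BL} its kernel is cyclic, generated by a pair $(P,Q)$ with $P\in E[2]$, $Q\in F[2]$ — so $(A,H)$ does lie in the image. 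You should add this argument (or an explicit appeal to the relevant part of \cite[Thm 5.2]{AB}) rather than treating surjectivity as part of the non-emptiness observation; once that is done your proof matches the paper's.
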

\begin{proof}
    Let $X(2)$ be the moduli space of triples $(E,P,P')$, where $P,P' \in E$ is a (symplectic) basis of 2-torsion points. Consider the map 
    
    \begin{align*}
        \Phi: X(2) \times X(2) &\to \mathcal{A}_2(1,2)\\
        ((E, P, P'), (F, Q, Q')) &\mapsto ((E \times F)/\langle (P,Q) \rangle , H),
    \end{align*} where $\Phi^*H = \mathcal{O}_E(2) \boxtimes \mathcal{O}_F(2).$ Since $(P,Q) \in (E \times F)[2]$, the quotient $X = (E \times F)/\langle (P,Q) \rangle$ is a $(1,2)$-polarised abelian surface. Moreover, it follows from \cite[Cor. 6.3.5]{BL} that the quotient map $\pi$ is a polarised isogeny hence $\Phi$ is well-defined. Since $X(2)$ is irreducible, it is enough to show that $\im (\Phi) = \mathcal{E}_2(2,2)$. Since $\pi_{|E}$ is injective, $E$ has exponent 2 in $X$ so the image of $\Phi$ lies in $\mathcal{E}_2(2,2).$ Now, take $(A, H) \in \mathcal{E}_2(2,2)$ and let $(E, F)$ be a complementary pair of elliptic curves in $A$. Poincare's Reducibility Theorem yields an addition map $\mu$ that is a polarised isogeny of degree 2 $$\mu: (E \times F, \mathcal{O}_E(2) \boxtimes \mathcal{O}_F(2)) \to (A, H).$$ By \cite[Prop. 5.3.11]{BL} we know that $\ker \mu = \langle (P,Q) \rangle$ for $P \in E[2], Q \in F[2]$ hence $(A, H) \in \im(\Phi)$. 
\end{proof}

Since we want to show an auxiliary result (i.e. Theorem \ref{dual}), from now on, let $d>1$ and assume that $P\in E[d],\ Q\in F[d]$ are primitive $d$-torsions (i.e. their orders are precisely $d$). In this case $A=(E \times F)/\langle (P,Q) \rangle$ is $(1,d)$ polarised and $A\in\cE_d(d,d)$.
We have the following.
\begin{prop}\label{noniso}
If $E,F$ are non-isogenous then $A$ does not contain any other elliptic curve. 
\end{prop}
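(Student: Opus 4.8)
The plan is to reduce the question to the completely understood lattice of abelian subvarieties of the product $E\times F$ and then transport the answer across the polarised isogeny $\pi\colon E\times F\to A$ with kernel $\langle (P,Q)\rangle$. Recall that in the construction $E$ and $F$ inside $A$ denote the images $\pi(E\times\{0\})$ and $\pi(\{0\}\times F)$; since $(P,Q)$ has $Q\neq 0$ (and $P\neq 0$) as $d>1$, the restrictions $\pi|_{E\times\{0\}}$ and $\pi|_{\{0\}\times F}$ are injective, so these images are genuinely the complementary pair of elliptic curves.

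First I would pin down the abelian subvarieties of $E\times F$. Because $E$ and $F$ are non-isogenous, $\operatorname{Hom}(E,F)=\operatorname{Hom}(F,E)=0$, hence $\operatorname{End}^0(E\times F)=\operatorname{End}^0(E)\oplus\operatorname{End}^0(F)$. Each summand is a field ($\QQ$ or an imaginary quadratic field), so the only idempotents of $\operatorname{End}^0(E\times F)$ are $(0,0),(1,0),(0,1),(1,1)$. By Poincar\'e reducibility every abelian subvariety of $E\times F$ is the image of a (symmetric) idempotent, and therefore the only elliptic curves, i.e. $1$-dimensional abelian subvarieties, in $E\times F$ are $E\times\{0\}$ and $\{0\}\times F$.

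Next, given an arbitrary elliptic curve $G\subset A$, I would pull it back along $\pi$. Since $\pi$ is an isogeny (finite and surjective), $\pi^{-1}(G)$ is a closed subgroup of dimension $1$; let $G'$ be its connected component through the origin, which is an elliptic curve in $E\times F$. By the previous step $G'\in\{E\times\{0\},\,\{0\}\times F\}$, and as $\pi|_{G'}$ is a surjection onto $G$ (the image of an equal-dimensional subvariety under a surjective isogeny), we conclude that $G=\pi(E\times\{0\})=E$ or $G=\pi(\{0\}\times F)=F$. Hence $A$ contains no elliptic curve other than the complementary pair $E,F$.

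The only point demanding genuine care — and the one I would state most precisely — is the passage from idempotents back to honest subvarieties in the second paragraph: the dictionary ``abelian subvariety $\leftrightarrow$ symmetric idempotent'' depends on a fixed polarisation and on the fact that every abelian subvariety arises as the image of such an idempotent, for which I would cite the relevant statements in \cite{BL}. The remaining ingredients (vanishing of the $\operatorname{Hom}$ groups, the identity component $G'$ being a coordinate curve, and surjectivity of $\pi|_{G'}$) are formal. I would also note that primitivity of $P,Q$ and $d>1$ are not needed for this proposition beyond ensuring the embeddings above; the statement rests solely on $E$ and $F$ being non-isogenous.
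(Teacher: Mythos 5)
Your proof is correct, but it takes a different route from the paper's. The paper never leaves $A$: it restricts the norm maps $\Nm_E,\Nm_F$ of the complementary pair to a putative elliptic curve $G\subset A$, observes that each restriction is either zero or surjective (the latter forcing $G$ to be isogenous to the target), and uses the identity $m_d\Nm_E+m_d\Nm_F=m_{d^2}$ to rule out both restrictions vanishing and both being nonzero simultaneously. You instead classify all abelian subvarieties upstairs in $E\times F$ via $\operatorname{End}^0(E\times F)=\operatorname{End}^0(E)\oplus\operatorname{End}^0(F)$ being a product of two fields (hence only trivial idempotents, hence only the two coordinate curves), and then descend through the isogeny $\pi$ by taking the identity component of $\pi^{-1}(G)$. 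The two arguments are cousins --- the paper's norm identity is exactly the relation $\epsilon_E+\epsilon_F=1$ for the symmetric idempotents of the complementary pair, and ``$\im(\Nm_E|_G)$ is $0$ or $E$'' plays the role of your coordinate projections --- but your version buys a slightly stronger intermediate statement (a complete list of abelian subvarieties of $E\times F$) at the cost of invoking the idempotent--subvariety dictionary and the pullback step, whereas the paper's is shorter and intrinsic to $A$. One cosmetic simplification available to you: to see that the only elliptic curves in $E\times F$ are the coordinate factors, it suffices to note that for $G'\subset E\times F$ elliptic the two projections $G'\to E$, $G'\to F$ cannot both be nonzero (else $E$ and $F$ would be isogenous through $G'$), so $G'$ lies in, hence equals, one factor; this avoids the idempotent machinery you rightly flag as the delicate point. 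Your closing remark is also accurate: non-isogeny is the only hypothesis doing work here, and primitivity of $P,Q$ matters only for the embeddings being injective.
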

\begin{proof}
   Let $G$ be en elliptic curve contained in $A$ and consider the Norm map of $E$. 
    Note that either $\im(\Nm_E|_G)=\{0\}$ or $\im(\Nm_E|_G)=E$ and the latter occurs when $E$ and $G$ are isogenous.  
Now, recall that $m_d\Nm_E+m_d\Nm_F=m_{d^2}$ where $m_k$ is multiplication by $k$ (see \cite[p. 124]{BL}), so $\im(\Nm_E|_G+\Nm_F|_G)=G$, hence if one of the images is $0$ then $G$ is equal to the other curve. However, if both maps are non-zero then $G$ is isogenous to both curves, hence $E,F$ are isogenous.
\end{proof}
\begin{lem}\label{lemintersections}
Let $E,F$ be complementary elliptic curves in $A=(E\times F)/\langle (P,Q) \rangle$ of exponent d. Then $Q=-P\in A$ and $E\cap F=\langle P \rangle\subset \ker\phi_H$. Moreover $E\cap \ker\phi_H=F\cap \ker\phi_H=\langle P \rangle$.     
\end{lem}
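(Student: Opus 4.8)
The plan is to push everything through the isogeny $\pi\colon E\times F\to A$ with kernel $\ker\pi=\langle(P,Q)\rangle$, recalling that the polarisation pulls back to the product polarisation $\pi^{*}H=\cO_E(d)\boxtimes\cO_F(d)$ of type $(d,d)$, whose kernel is $K(\pi^{*}H)=E[d]\times F[d]$ and whose commutator form is the product $e^{\cO_E(d)}\times e^{\cO_F(d)}$ of the two Weil pairings. Throughout I would use that the embeddings $E,F\hra A$ are injective, which holds because $P,Q$ are primitive of order exactly $d$.

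The identity $Q=-P$ in $A$ is immediate: since $(P,Q)\in\ker\pi$, its image $\pi(P,0)+\pi(0,Q)=\pi(P,Q)=0$ vanishes, and under the embeddings the classes $\pi(P,0)$ and $\pi(0,Q)$ are precisely $P$ and $Q$, so $Q=-P$. To compute $E\cap F$ I would observe that $\pi(e,0)=\pi(0,f)$ forces $(e,-f)\in\langle(P,Q)\rangle$, i.e.\ $e=kP$ for some $k\in\ZZ/d$; letting $k$ range sweeps out exactly the cyclic group $\langle P\rangle$ of order $d$. For the inclusion $\langle P\rangle\subset\ker\phi_H$ I would invoke descent: $\langle(P,Q)\rangle$ is isotropic for $e^{\pi^{*}H}$ (both Weil pairings are alternating), so $H$ descends and $\ker\phi_H=K(H)=\langle(P,Q)\rangle^{\perp}/\langle(P,Q)\rangle$ inside $E[d]\times F[d]$. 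Since $e^{\pi^{*}H}\big((P,0),(P,Q)\big)=e^{\cO_E(d)}(P,P)=1$, the point $(P,0)$ lies in $\langle(P,Q)\rangle^{\perp}$, whence its class $P\in A$ lies in $\ker\phi_H$; thus $\langle P\rangle=E\cap F\subset\ker\phi_H$.

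For the \emph{Moreover} part I would establish the reverse inclusion $E\cap\ker\phi_H\subseteq\langle P\rangle$. A class $\pi(a,b)$ with $(a,b)\in\langle(P,Q)\rangle^{\perp}\subseteq E[d]\times F[d]$ lies on $E$ exactly when $b\in\langle Q\rangle$, and in that case the perpendicularity relation $e^{\cO_E(d)}(a,P)\,e^{\cO_F(d)}(b,Q)=1$ collapses (because $e^{\cO_F(d)}(Q,Q)=1$) to $e^{\cO_E(d)}(a,P)=1$. This is where primitivity is essential: for a primitive $d$-torsion point the annihilator of $P$ under the nondegenerate Weil pairing on $E[d]$ is precisely the cyclic group $\langle P\rangle$, so $a\in\langle P\rangle$ and therefore $\pi(a,b)\in\langle P\rangle$. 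Combining the two inclusions gives $E\cap\ker\phi_H=\langle P\rangle$, and running the identical argument with the roles of $E,F$ (and $P,Q$) interchanged yields $F\cap\ker\phi_H=\langle Q\rangle$, which equals $\langle P\rangle$ in $A$ by the first identity $Q=-P$.

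The main obstacle is the bookkeeping around the descended polarisation: correctly identifying $\ker\phi_H$ with $\langle(P,Q)\rangle^{\perp}/\langle(P,Q)\rangle$ and translating the two nondegenerate Weil pairings upstairs into statements about the intersections downstairs. The one genuinely delicate point is the reverse inclusion, where the primitivity hypothesis on $P$ (equivalently, that $E$ has exponent exactly $d$) is exactly what forces the annihilator of $P$ to be the cyclic group $\langle P\rangle$ rather than something larger; without it the intersection $E\cap\ker\phi_H$ could be strictly bigger.
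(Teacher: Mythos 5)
Your proposal is correct and follows essentially the same route as the paper: push everything through the quotient isogeny $\mu\colon E\times F\to A$, compute $E\cap F$ from $\ker\mu=\langle(P,Q)\rangle$, and identify $\ker\phi_H$ with the image of the symplectic complement $(\ker\mu)^{\perp}$, where primitivity of $P$ forces the annihilator of $P$ in $E[d]$ to be exactly $\langle P\rangle$. The paper phrases the last step by writing $x=aP+bP'$ in a symplectic basis and checking $(x,0)\in(\ker\mu)^{\perp}\iff b=0$, which is the same computation you carry out with the Weil pairing; your write-up is merely somewhat more explicit about the reverse inclusion.
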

\begin{proof}
Recall that $\mu(P,0)=P\in E,\ \mu(0,Q)=Q\in F$. Now, the first equation follows trivially from the fact that $(P,0)+(0,Q)=(P,Q)\in \ker\mu$.

To see that $E\cap F=\langle P \rangle$ one can check that $\mu(x,0)=\mu(0,y)\iff (x,-y)\in\ker\mu=\langle (P,Q) \rangle$ and use the fact that $P,Q$ are primitive $d$-torsions.

Now, in order to finish the proof, one needs to use the fact that $\ker\phi_H=\mu((\ker\mu)^{\perp})$ where $\perp$ stands for the symplectic complement (with respect to the symplectic pairing $\omega$), see \cite[Prop 6.2.1 and 6.3.3]{BL}. 
Since $x\in E[d]$ is of the form $aP+bP'$ (where $a,b\in\ZZ_d$) with $\omega(P,P')=1\in\ZZ_d$ we get that $(aP+bP',0)\in (\ker\mu)^{\perp} \iff b=0$.  
\end{proof}

Now, we are ready to show a way to find an explicit condition that proves that a surface is not abstractly isomorphic to its dual. 
\begin{thm}\label{dual}
Let $E,F$ be non-isogenous elliptic curves with $P\in E[d]$ and $Q \in F[d]$ primitive $d$-torsion points. Assume that $E/P$ is not isomorphic to $E$ or $F/Q$ is not isomorphic to $F$. Then $E\times F/\langle(P,Q)\rangle$ (of type $(1,d)$) is not (abstractly) isomorphic to its dual.
\end{thm}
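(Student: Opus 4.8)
The plan is to determine all elliptic curves contained in the dual $\widehat{A}$ and compare them with those contained in $A$ itself. First, by Proposition \ref{noniso}, since $E,F$ are non-isogenous the only elliptic curves in $A$ are (the images of) $E$ and $F$; in particular $E\not\cong F$.

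Next I would identify the elliptic curves of $\widehat{A}$. The polarization map $\phi_H:A\to\widehat{A}$ is an isogeny, so it carries the complementary pair $E,F$ to a complementary pair $\phi_H(E),\phi_H(F)$ of elliptic curves in $\widehat{A}$. To compute these as abstract curves, note that $\phi_H|_E=\phi_H\circ\iota_E$ with $\iota_E:E\hookrightarrow A$ injective, so its kernel is $E\cap\ker\phi_H$, which by Lemma \ref{lemintersections} equals $\langle P\rangle$. Hence $\phi_H(E)\cong E/\langle P\rangle=E/P$, and symmetrically (using $F\cap\ker\phi_H=\langle P\rangle=\langle Q\rangle$) $\phi_H(F)\cong F/Q$. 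Since $E/P$ is isogenous to $E$ and $F/Q$ to $F$, the pair $\phi_H(E),\phi_H(F)$ is again non-isogenous, and $\widehat{A}$ is again $(1,d)$ polarised with this complementary pair, so it lies in $\cE_d(d,d)$. The proof of Proposition \ref{noniso}, which relies only on the norm relation $m_d\Nm_E+m_d\Nm_F=m_{d^2}$ valid for any surface in $\cE_d(d,d)$, then applies verbatim to $\widehat{A}$ and shows that $E/P$ and $F/Q$ are its \emph{only} elliptic curves.

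Now suppose toward a contradiction that there is an abstract isomorphism $\psi:A\xrightarrow{\sim}\widehat{A}$. Then $\psi$ sends elliptic subcurves to elliptic subcurves, inducing isomorphisms of the abstract curves, so the two isomorphism classes $\{E,F\}$ must coincide with $\{E/P,F/Q\}$. Since $E$ is isogenous to $E/P$ but not to $F/Q$ (the latter being isogenous to $F$), and $E,F$ are non-isogenous, the matching is forced to be $E\cong E/P$ and $F\cong F/Q$. Contrapositively, if $E/P\not\cong E$ \emph{or} $F/Q\not\cong F$, then no such $\psi$ exists, i.e.\ $A$ is not abstractly isomorphic to $\widehat{A}$, which is exactly the assertion.

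The main obstacle is the middle step: correctly identifying $\phi_H(E),\phi_H(F)$ as the quotient curves $E/P,F/Q$ and verifying that $\widehat{A}$ again belongs to $\cE_d(d,d)$ (in particular that the complementary curves keep exponent $d$) so that Proposition \ref{noniso} applies to it. Once the elliptic curves on both sides are pinned down together with their isogeny classes, the concluding comparison is purely formal.
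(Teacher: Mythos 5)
Your proposal is correct and follows essentially the same route as the paper: both use Lemma \ref{lemintersections} to identify $E/\langle P\rangle$ as an elliptic curve embedded in $\widehat{A}$ and then invoke Proposition \ref{noniso} together with the isogeny classes to derive a contradiction. The paper is slightly more economical---after reducing by symmetry to the case $E/P\not\cong E$, it exhibits only the single curve $E/P\subset\widehat{A}$ and transports it to $A$ via the hypothetical isomorphism, rather than classifying all elliptic curves of $\widehat{A}$ and matching the two pairs---but the substance is identical.
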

\begin{proof}
By symmetry we can assume $E/P$ is not isomorphic to $E$.
    Denote by $A=E\times F/\langle(P,Q)\rangle$. Then its dual is $\hat{A}=A/\ker\phi_H$.
    By Lemma \ref{lemintersections}, we get that $E\cap \ker\phi_H=\langle P\rangle$, so $E'=E/\langle P\rangle$ is an elliptic curve embedded in $\hat{A}$. If $\hat{A}$ was isomorphic to $A$ then the image of $E'$ via the isomorphism would be an elliptic curve embedded in $A$. By assumption it cannot be $E$ and it cannot be $F$ since they are non-isogenous. Since this contradicts Proposition \ref{noniso}, we get that $\hat{A}$ is not isomorphic to $A$. 
\end{proof}

\begin{rem}
    It is well-known that a general non-principally polarised abelian variety is not isomorphic to its dual. The advantage of Theorem \ref{dual} is that it provides a family of examples that enables to write down  explicit period matrices of surfaces.
\end{rem}
We would like to finish this section by describing better the connection between $A$ and $\hat{A}$.
\begin{prop}\label{duality}
If $A=E\times F/\langle(P,Q)\rangle$, then $A/P=E'\times F'$ and
$\hat{A}=E'\times F'/G$, where $G\simeq \ker\phi_H/P\simeq \langle(P',-Q')\rangle$. 
In particular, $A\in\ee \iff\hat{A}\in\ee$.
\end{prop}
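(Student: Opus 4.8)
The plan is to produce the two stated identities by following the degree-$d$ isogeny $\mu\colon E\times F\to A$ through the successive quotients $A/\langle P\rangle$ and $\hat A=A/\ker\phi_H$, and then to read off the generator of the remaining kernel from the symplectic complement already described in the proof of Lemma~\ref{lemintersections}. Throughout I keep the symplectic bases $E[d]=\langle P,P'\rangle$ and $F[d]=\langle Q,Q'\rangle$, with $\omega(P,P')=\omega(Q,Q')=1$, fixed as there.

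First I would determine the $\mu$-preimage of $\langle P\rangle$. Since $\mu(P,0)=P$ and $\ker\mu=\langle(P,Q)\rangle$, this preimage equals $\langle(P,0)\rangle+\ker\mu=\langle(P,0),(0,Q)\rangle=\langle P\rangle\times\langle Q\rangle$, whence
$$A/\langle P\rangle=(E\times F)/(\langle P\rangle\times\langle Q\rangle)=E/\langle P\rangle\times F/\langle Q\rangle=E'\times F'.$$
This gives the first assertion, with $E'=E/\langle P\rangle$ and $F'=F/\langle Q\rangle$.

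By Lemma~\ref{lemintersections} we have $\langle P\rangle=E\cap\ker\phi_H\subseteq\ker\phi_H$, so the dual factors through the previous quotient:
$$\hat A=A/\ker\phi_H=(E'\times F')/G,\qquad G:=\ker\phi_H/\langle P\rangle.$$
To identify $G$ I would reuse $\ker\phi_H=\mu((\ker\mu)^{\perp})$ together with the product pairing: for $(aP+bP',cQ+eQ')\in E[d]\times F[d]$ one computes $\omega\big((aP+bP',cQ+eQ'),(P,Q)\big)=-(b+e)$, so $(\ker\mu)^{\perp}$ is cut out by $e=-b$ and is generated by $(P,0)$, $(0,Q)$ and $(P',-Q')$. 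Applying $\mu$ sends the first two generators into $\langle P\rangle$ (recall $\mu(0,Q)=-P$), leaving $\ker\phi_H=\langle P,\mu(P',-Q')\rangle$; passing to $A/\langle P\rangle=E'\times F'$ the class of $\mu(P',-Q')$ becomes $(P',-Q')$, so $G=\langle(P',-Q')\rangle\cong\ker\phi_H/\langle P\rangle$, which is cyclic of order $d$ because $\{P,P'\}$ is a basis of $E[d]$. This is the second assertion.

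For the final equivalence I would note that $P'$ and $Q'$ descend to primitive $d$-torsions on $E'$ and $F'$ (again since $\{P,P'\}$ and $\{Q,Q'\}$ are bases), so when $d=2$ the identity $\hat A=E'\times F'/\langle(P',-Q')\rangle$ presents $\hat A$ as a quotient of exactly the form defining $\mathcal{E}_2(2,2)$, with $E',F'$ a complementary pair of exponent-$2$ elliptic curves; hence $A\in\ee\Rightarrow\hat A\in\ee$, and the converse follows by applying this implication to $\hat A$ via $\hat{\hat A}\cong A$. The one genuinely delicate point is the third step—pinning down $(\ker\mu)^{\perp}$ and, in particular, the correct sign in the generator $(P',-Q')$—but this is only a mild extension of the computation already carried out for Lemma~\ref{lemintersections}.
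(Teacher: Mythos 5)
Your argument is correct and follows essentially the same route as the paper's proof: identify $\mu^{-1}(\langle P\rangle)=\langle(P,0),(0,Q)\rangle$ to get $A/P=E'\times F'$, then realise $\hat A$ as the further quotient by $\ker\phi_H/\langle P\rangle$ and, for $d=2$, recognise $\hat A$ as lying in the image of the map $\Phi$ from Lemma 2.2. You simply carry out explicitly two points the paper leaves implicit, namely the symplectic-complement computation pinning down the generator $(P',-Q')$ and the biduality argument for the converse implication.
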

\begin{proof}
    Note that $A/P = (E \times F)/\langle (P, 0), (0, Q)\rangle$ and $\langle (P, 0), (0, Q)\rangle$ is an isotropic subgroup of the form $\ZZ_d\oplus\ZZ_d$ coming from $\mathcal{O}_E(d) \boxtimes \mathcal{O}_F(d)$. Thus, $A/P = E' \times F'$ (with a principal polarization) and the quotient map is a polarised isogeny by \cite[Cor. 6.3.5]{BL}. Since $\hat{A} = A/ \ker\phi_H$, we get the desired equality for $\hat{A}$. In the case $d = 2$, $(P', -Q')$ is a 2-torsion point of $E'\times F'$, hence $\hat{A}$ lies in the image of $\Phi$, which finishes the proof.  
\end{proof}

\section{Geometry of hyperelliptic genus 3 curves with involutions}
Now, we are ready to set up the construction. We start by recalling a useful result concerning involutions on hyperelliptic curves that is an easy application of Accola's Lemma \cite{accola}. 
\begin{lem}
\label{l1m}
    Let $X$ be a smooth hyperelliptic curve of genus $g = 3$ with $\iota$ the hyperelliptic involution. For any $\tau \in \Aut(X)$, $\tau^2 = \id$, $\tau \neq \iota$ we have $$\Fix(\tau) = \varnothing, \: |\Fix(\iota\tau)| = 4, \: \: \: \text{or} \:\:\: |\Fix(\tau)| = 4, \Fix(\iota\tau) = \varnothing.$$

    Now, let $\sigma \neq \tau$ be commuting involutions on $X$ such that $|\Fix(\sigma)| = |\Fix(\tau)| = 4$. Then $|\Fix(\sigma\tau)| = 4$ and $g(X/\langle\sigma,\tau\rangle)=0$. 
\end{lem}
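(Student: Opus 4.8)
The plan is to reduce both assertions to a single genus count, combining Riemann--Hurwitz with Accola's genus relation for a Klein four-group action --- precisely the tool advertised just before the lemma.

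First I would fix the elementary group-theoretic setup. Since $\sigma\neq\tau$ are commuting involutions, $\sigma\tau$ is again an involution with $\sigma\tau\neq\id$ (as $\sigma\tau=\id$ would force $\sigma=\tau$) and $\sigma\tau\notin\{\sigma,\tau\}$; hence $\langle\sigma,\tau\rangle=\{\id,\sigma,\tau,\sigma\tau\}\cong(\ZZ/2)^2$ is a Klein four-group acting on $X$. Write $Y=X/\langle\sigma,\tau\rangle$ and let $g_\sigma,g_\tau,g_{\sigma\tau}$ be the genera of the three intermediate quotients $X/\langle\sigma\rangle$, $X/\langle\tau\rangle$, $X/\langle\sigma\tau\rangle$.

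Next I would compute these three genera from Riemann--Hurwitz applied to each degree-$2$ map $X\to X/\langle\rho\rangle$, which for $g(X)=3$ reads $8=4g_\rho+|\Fix(\rho)|$. The hypotheses $|\Fix(\sigma)|=|\Fix(\tau)|=4$ give $g_\sigma=g_\tau=1$. For $\sigma\tau$ the same formula gives $g_{\sigma\tau}=(8-|\Fix(\sigma\tau)|)/4$, and since $g_{\sigma\tau}$ is a non-negative integer this already forces $|\Fix(\sigma\tau)|\in\{0,4,8\}$ and $g_{\sigma\tau}\in\{0,1,2\}$. (One may also note directly that $\sigma\tau\neq\iota$: if $\sigma\tau=\iota$ then $\tau=\iota\sigma$, and the first part of the lemma applied to $\sigma$ gives $\Fix(\iota\sigma)=\varnothing$, contradicting $|\Fix(\tau)|=4$.)

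The decisive step is Accola's genus relation for the Klein four-group cover $X\to Y$, namely
$$g(X)+2g(Y)=g_\sigma+g_\tau+g_{\sigma\tau}.$$
Substituting $g(X)=3$ and $g_\sigma=g_\tau=1$ yields $g_{\sigma\tau}=1+2g(Y)$, so $g_{\sigma\tau}$ is necessarily \emph{odd}. Together with $g_{\sigma\tau}\in\{0,1,2\}$ this leaves only $g_{\sigma\tau}=1$, whence $g(Y)=0$ and, reading Riemann--Hurwitz backwards, $|\Fix(\sigma\tau)|=8-4\cdot 1=4$. This settles both claims at once. The only point demanding care is the precise form and applicability of Accola's relation; once the parity constraint $g_{\sigma\tau}=1+2g(Y)$ is in hand, the fixed-point-free case $|\Fix(\sigma\tau)|=0$ (which would give the even value $g_{\sigma\tau}=2$) and the case $\sigma\tau=\iota$ (giving $g_{\sigma\tau}=0$) are automatically excluded.
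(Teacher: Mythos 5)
Your argument for the second paragraph of the lemma is correct and is essentially the paper's own proof: Riemann--Hurwitz in the form $8=4g_\rho+|\Fix(\rho)|$ gives $g_\sigma=g_\tau=1$, Accola's relation for the Klein four-group $\langle\sigma,\tau\rangle$ gives $g_{\sigma\tau}=1+2g(Y)$, and the parity constraint together with $g_{\sigma\tau}\in\{0,1,2\}$ forces $g_{\sigma\tau}=1$, $g(Y)=0$, $|\Fix(\sigma\tau)|=4$. Your observation that the cases $|\Fix(\sigma\tau)|=0$ and $\sigma\tau=\iota$ are excluded automatically by parity is also how the paper argues.

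The gap is that you never prove the first paragraph of the statement: the dichotomy that for a single involution $\tau\neq\iota$ one has either $\Fix(\tau)=\varnothing$, $|\Fix(\iota\tau)|=4$ or $|\Fix(\tau)|=4$, $\Fix(\iota\tau)=\varnothing$. This is a genuinely separate assertion (it is what the paper uses to conclude $|\Fix(\iota\sigma)|=|\Fix(\iota\tau)|=|\Fix(\iota\sigma\tau)|=0$ later on), and your own parenthetical remark even invokes ``the first part of the lemma applied to $\sigma$'', which is circular as long as that part is unproved. The missing argument is short but must be supplied: apply Accola's relation to the Klein four-group $\langle\tau,\iota\rangle$ (whose total quotient has genus $0$, being a quotient of $X/\iota\cong\PP^1$) to get $g(X/\tau)+g(X/\iota\tau)=3$; Riemann--Hurwitz rules out a genus-$3$ quotient, and a genus-$0$ quotient would exhibit a second degree-$2$ map to $\PP^1$, i.e.\ would force the involution to be $\iota$ by uniqueness of the $g^1_2$; hence $\{g(X/\tau),g(X/\iota\tau)\}=\{1,2\}$, which translates into the stated fixed-point counts $\{4,0\}$. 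With that paragraph added, your proof matches the paper's.
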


\begin{proof}

By Accola's Lemma (\cite[Lemma 1]{accola}) applied to $G_0 = \langle \tau, \iota\rangle$ we get $$3 = g(X/\tau) + g(X/\iota\tau).$$ Then, Hurwitz formula yields $g(X/\tau) = 2$ and $g(X/\iota\tau) = 1$ or vice versa. 
    
    For the second part, applying Accola's Lemma (\cite{accola}) to $G_0 = \langle \tau, \sigma \rangle$ we have
    $$3 + 2g(X/\langle\sigma,\tau\rangle) = g(X/\sigma) + g(X/\tau) + g(X/\sigma\tau).$$
    By Hurwitz formula, $g(X/\sigma) = g(X/\tau) = 1$, hence $g(X/\sigma\tau)$ is odd. The only possibility is $g(X/\sigma\tau) = 1$, hence $|\Fix(\sigma\tau)| = 4$. Since the right hand side of the equation equals 3, we get that $g(X/\langle\sigma,\tau\rangle)=0$.
\end{proof}

From now on, we will view elliptic curves both as one-dimensional abelian varieties and genus one curves with a chosen point. We will also use the fact that any elliptic curve is canonically isomorphic to its dual (as an abelian variety).
With such convention, we state  the following result.
\begin{lem}\label{lem32}
   Let $C$ be a smooth curve of genus $g>1$ and $E$ an elliptic curve. Then the following are equivalent:
   \begin{itemize}
       \item[i)] there exists a double covering $f:C\to E$; 
       \item[ii)] $E$ can be embedded into $JC$ as an abelian subvariety of exponent $2$.
   \end{itemize}
\end{lem}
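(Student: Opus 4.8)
The plan is to prove the two implications separately, the main tool in both directions being the autoduality of the Jacobian: for a nonconstant morphism $f\colon C\to E$ of smooth curves the pullback $f^*\colon JE\to JC$ and the norm $\Nm_f\colon JC\to JE$ are dual to one another under the canonical principal polarizations $\phi_{JC}\colon JC\to\widehat{JC}$ and $\phi_E\colon E\to\widehat E$, i.e. $\widehat{f^*}\circ\phi_{JC}=\phi_E\circ\Nm_f$ (see \cite{BL}). I will also use that the exponent of an elliptic subvariety $\iota\colon E\hra JC$ is exactly the type of the polarization $\iota^*:=\hat\iota\circ\phi_{JC}\circ\iota$ induced on $E$: since $\mathrm{NS}(E)\simeq\ZZ$, this induced polarization equals $e(E)\cdot\phi_E$, so $E$ has exponent $2$ precisely when $\iota^*=2\phi_E$.

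For $i)\Rightarrow ii)$ I would first note that the double cover $f$ must be ramified: by Hurwitz, $2g-2=\deg f\,(2g(E)-2)+R=R$, so $g>1$ forces $R>0$. Ramification makes $f^*$ injective: if $0\neq L\in\ker f^*$, then $\Nm_f\circ f^*=[2]$ gives $L^{\otimes 2}=\cO_E$, and a nontrivial $2$-torsion $L$ with $f^*L\simeq\cO_C$ would force $f$ to factor through the étale double cover attached to $L$, contradicting ramification; hence $\ker f^*=0$ and $\iota:=f^*\colon E\hra JC$ is an embedding. Finally I compute the induced polarization $\widehat{f^*}\circ\phi_{JC}\circ f^*$; autoduality rewrites $\widehat{f^*}\circ\phi_{JC}=\phi_E\circ\Nm_f$, and $\Nm_f\circ f^*=[2]$, so the induced polarization is $2\phi_E$. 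By the remark above, $E$ has exponent $2$.

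For $ii)\Rightarrow i)$ I would turn the embedding $\iota\colon E\hra JC$ into a map of curves. Set $\pi:=\phi_E^{-1}\circ\hat\iota\circ\phi_{JC}\colon JC\to E$; since $E$ has exponent $2$, we have $\pi\circ\iota=\phi_E^{-1}\circ(2\phi_E)=[2]_E$, and $\pi$ is surjective because $\hat\iota$ is. Fixing a base point and the Abel--Jacobi map $\alpha\colon C\to JC$, I define $h:=\pi\circ\alpha\colon C\to E$. It is nonconstant, since $\alpha(C)$ generates $JC$ and a constant $h$ would make $\pi$ constant, contradicting surjectivity; hence $h$ is a finite covering. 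By the universal property of $\alpha$ the Albanese map of $h$ is $\Nm_h=\pi$, and autoduality then gives $h^*=\widehat{\Nm_h}=\widehat{\pi}$, which under the identifications $E\cong\widehat E$ and $JC\cong\widehat{JC}$ (and the symmetry $\hat\phi=\phi$) is exactly $\iota$. Consequently the induced polarization $\widehat{h^*}\circ\phi_{JC}\circ h^*=\phi_E\circ\Nm_h\circ h^*=\deg(h)\cdot\phi_E$ equals the induced polarization of $\iota$, namely $2\phi_E$; comparing types yields $\deg h=2$, so $h\colon C\to E$ is the desired double covering.

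The routine part is the Hurwitz and ramification bookkeeping; the step I expect to be most delicate is the chain of identifications establishing $h^*=\iota$, where one must keep careful track of the autoduality isomorphism, the symmetry of the principal polarizations, and the canonical identification $E\cong JE$ of the elliptic curve, so that the equality of induced polarizations translates cleanly into the equality of degrees.
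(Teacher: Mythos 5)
Your proof is correct, and it follows the same route as the paper: the paper's proof consists of observing that a branched double covering cannot factor through a cyclic \'etale covering and then citing \cite[Prop.\ 12.3.2]{BL} for the resulting identity $f^*\circ\Nm_f=N_E$, from which both the injectivity of $f^*$ and the exponent statement follow. Your write-up simply unpacks that citation in full detail --- the Hurwitz/ramification step, the injectivity of $f^*$, the computation $\widehat{f^*}\circ\phi_{JC}\circ f^*=2\phi_E$, and the reconstruction of the degree-two map from the norm endomorphism in the converse direction --- so it is the same argument made self-contained rather than a genuinely different one.
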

\begin{proof}
    The equivalence follows from the fact that $f$ does not factorise via a cyclic \'etale covering (because it is branched of degree 2), hence $f^* \circ Nm_f=Nm_E$, see \cite[Prop 12.3.2]{BL}.
\end{proof}

In \cite{BO23} the authors have investigated hyperelliptic curves with additional commuting involutions and they have shown the injectivity of the hyperelliptic Klein Prym map in a general case. The only case that has been left is a genus 3 hyperelliptic curve that is a Klein covering of an elliptic curve. Now, we are ready to investigate such a case.

 Let $\Tilde{C}$ be a hyperelliptic genus 3 curve such that $(\mathbb{Z}/2\mathbb{Z})^3 \simeq \langle \iota, \sigma, \tau \rangle \subseteq \Aut(\Tilde{C})$. Here $\iota$ denotes the hyperelliptic involution on $\Tilde{C}$. In addition, we assume that $\sigma$ and $\tau$ each have 4 fixed points and hence $\sigma\tau$ also has 4 fixed points (see Lemma \ref{l1m}). We also get $|\Fix(\iota\tau)| = |\Fix(\iota\sigma)| = |\Fix(\iota\sigma\tau)| = 0$.
We introduce the following notation for quotient curves: 
\begin{align*}
    E_{\sigma} := \Tilde{C}/\langle \sigma \rangle, & \ \ \ \ 
    E_{\tau} := \Tilde{C}/\langle \tau \rangle,\\ 
    E_{\sigma\tau} := \Tilde{C}/\langle \sigma\tau \rangle,& \ \ \ \
    E_{\sigma,\iota\tau} := \Tilde{C}/\langle \sigma, \iota\tau \rangle,\\ 
    E_{\tau, \iota\sigma} := \Tilde{C}/\langle \tau, \iota\sigma \rangle, & \ \ \ \ 
    E_{\iota\sigma, \iota\tau} := \Tilde{C}/\langle \iota\sigma, \iota\tau \rangle, \\
    C_{\iota\sigma} := \Tilde{C}/\langle \iota\sigma \rangle, & \ \ \ \ 
    C_{\iota\tau} := \Tilde{C}/\langle \iota\tau \rangle,\\ C_{\iota\sigma\tau} := \Tilde{C}/\langle \iota\sigma\tau \rangle.&
\end{align*}
Note that Hurwitz formula yields that the six curves denoted by letter $E$ are elliptic, while the other three denoted by $C$ have genus 2. We have the following commutative diagram, where by +4 (resp. +2) we indicate that a degree two map is branched at 4 points (resp. at 2 points). 

$$\begin{tikzcd} \label{picture}
                                                  &  & \Tilde{C} \arrow[lldd, "et"'] \arrow[dd, "et"] \arrow[rrdd, "et"] \arrow[llddd, "+4"] \arrow[rrddd, "+4"'] \arrow[ddd, "+4", bend left] &  &                                                         \\
                                                  &  &                                                                                                                                         &  &                                                         \\
C_{\iota\tau} \arrow[rrddd] \arrow[rrrrddd] &  & C_{\iota\sigma} \arrow[llddd, "+2"] \arrow[rrddd]                                                                                             &  & C_{\iota\sigma\tau} \arrow[llddd, "+2"] \arrow[llllddd] \\
E_{\tau} \arrow[dd, "et"']                        &  & E_{\sigma} \arrow[dd, "et"]                                                                                                             &  & E_{\sigma\tau} \arrow[dd, "et"]                         \\
                                                  &  &                                                                                                                                         &  &                                                         \\
{E_{\tau,\iota\sigma}}                            &  & {E_{\sigma,\iota\tau}}                                                                                                                  &  & {E_{\iota\sigma,\iota\tau}}                            
\end{tikzcd}$$

Our aim is to investigate the diagram above. We start by showing how to associate five points on $\mathbb{P}^1$ (with a distinguished triple) to a curve $\Tilde{C}$. Let $w, \s (w), \tau (w),\st (w), w', \s (w'), \tau (w'), \st (w')$ be the eight Weierstrass points of $\Tilde{C}$ and 
\begin{align*}
    \Fix(\sigma) &= \{p_1, p_2, \iota p_1, \iota p_2\}, \\
    \Fix(\tau) &= \{q_1, q_2, \iota q_1, \iota q_2\}, \\
    \Fix(\sigma\tau) &= \{r_1, r_2, \iota r_1, \iota r_2\}.
\end{align*}
A proof that all points defined above are distinct from each other can be found in \cite{BO23}.

Let $f: \Tilde{C}\to \Tilde{C}/\langle \sigma,\tau \rangle \simeq \mathbb{P}^1$ be the quotient $4:1$ map and $[w]$ and $[w']$ be the images of Weierstrass points under $f$. Let $g: \mathbb{P}^1 \to \mathbb{P}^1$ be a 2:1 map ramified at $[w]$ and $[w']$ and $[p], [q], [r]$ be the images under $g\circ f$ of fixed points of $\sigma, \tau$ and $\sigma\tau$ respectively. Finally, let $h: \mathbb{P}^1 \to \mathbb{P}^1$ be a 4:1 map branched at $[p], [q], [r]$ with two simple ramifications at each fiber and $\pi_{\tC}:\tC\to\PP^1$ be the hyperelliptic $2:1$ map. In this way we have constructed the following commutative diagram

$$\begin{tikzcd}
\Tilde{C} \arrow[dd, "\pi_{\Tilde{C}}"'] \arrow[rr, "f"] &  & \mathbb{P}^1 \arrow[dd, "g"] \\
                                                           &  &                                \\
\mathbb{P}^1 \arrow[rr, "h"]                             &  & \mathbb{P}^1                  
\end{tikzcd}$$
where $$h \circ \pi_{\Tilde{C}} = g \circ f: \Tilde{C} \to \Tilde{C}/\langle \sigma, \tau, \iota \rangle = \mathbb{P}^1$$ is the $8:1$ quotient map. We have a 5-tuple of points lying in the bottom-right $\mathbb{P}^1$, namely $$(g([w]), g([w']); [p], [q], [r]).$$ This is the 5-tuple associated to $\Tilde{C}$. Here, $[p], [q]$ and $[r]$ form a distinguished triple. 

On the other hand, starting with a 5-tuple with a distinguished triple $$(x,y; [p], [q], [r])$$ we can reconstruct the curve $\Tilde{C}$ as follows. Take a $\mathbb{Z}_2^2$-Galois covering branched at $[p], [q], [r]$. The preimages of $x,y$ give us 8 points $w_1,\ldots, w_8$. Then, $\Tilde{C}$ is a double cover of $\mathbb{P}^1$ branched at these points. Since $w_1,\ldots, w_8$ are invariant under the action of $\mathbb{Z}_2^2$, there are two additional commuting involutions on $\tC$ lifted from $\PP^1$.

\begin{thm}\label{moduli}
The following are equivalent:
\begin{enumerate}
    \item the locus of 5 points in $\PP^1$ with a chosen triple (up to projective equivalence respecting the distinguished triple);
    \item the locus of hyperelliptic genus 3 curves having two additional commuting involutions (different from the hyperelliptic one);
    \item the locus of genus 3 curves $\Tilde{C}$ having $\ZZ^3_2\subset Aut(\Tilde{C})$;
    \item the locus of hyperelliptic curves given by $y^2=(x^4+ax^2+1)(x^4+bx^2+1)$, $a,b\in\CC,\ a\neq b$ $a^2,b^2\neq 4$.
\end{enumerate}
\end{thm}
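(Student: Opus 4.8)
The plan is to establish the four conditions as one equivalence by running a short cycle through (2), which I treat as the hub: the representation-theoretic equivalence (2)$\Leftrightarrow$(3), the geometric construction (2)$\Leftrightarrow$(1) already outlined before the statement, and an explicit coordinate normalisation (2)$\Leftrightarrow$(4). Throughout I read (2) as asserting that $\iota,\sigma,\tau$ generate a group isomorphic to $\ZZ_2^3$, i.e. that $\sigma\tau\neq\iota$; this is the only relation that could collapse the group, and it is excluded by ``two additional'' involutions being genuinely independent of the hyperelliptic one.

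For (3)$\Rightarrow$(2) I would use that for a curve of genus $\geq 2$ the action of $\Aut(\tC)$ on $V:=H^0(\tC,\Omega^1)$ is faithful. Here $\dim V=3$ and $G\cong\ZZ_2^3$, so writing $V=\bigoplus_\chi n_\chi\chi$ as a sum of characters of $G$, faithfulness forces the support $S=\{\chi:n_\chi>0\}$ to have trivial common kernel. Since $\sum n_\chi=3$, this is possible only if $|S|=3$, each $n_\chi=1$, and $S$ is a basis of the character group $\hat G$. For an involution $g\in G$ one has $g(\tC/\langle g\rangle)=\dim V^{\langle g\rangle}=\#\{\chi\in S:\chi(g)=1\}$, so the unique $g$ with $\chi(g)=-1$ for all three $\chi\in S$ satisfies $g(\tC/\langle g\rangle)=0$; by Riemann--Hurwitz it has $8$ fixed points and is therefore the hyperelliptic involution $\iota\in G$. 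The remaining six nontrivial elements split as three with one trivial coordinate ($4$ fixed points) and three with two ($0$ fixed points), recovering exactly the fixed-point pattern of the setup. Thus $\tC$ is hyperelliptic and $G=\langle\iota,\sigma,\tau\rangle$, giving (2). Conversely, for (2)$\Rightarrow$(3), $\iota$ is central in $\Aut(\tC)$, so $\langle\iota,\sigma,\tau\rangle$ is elementary abelian; with $\sigma\neq\tau$, both $\neq\iota$, and $\sigma\tau\neq\iota$ the generators are $\FF_2$-independent, so the group is exactly $\ZZ_2^3$ (and the same count shows no faithful $\ZZ_2^4$-action is possible).

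For (1)$\Leftrightarrow$(2) I would verify that the two constructions described before the statement are mutually inverse at the level of loci. In one direction the tower $\tC\xrightarrow{f}\tC/\langle\sigma,\tau\rangle\cong\PP^1$, followed by $g$ and $h$, produces the $5$-tuple $(g([w]),g([w']);[p],[q],[r])$; the only choices are projective coordinates on the three copies of $\PP^1$ and the $2{:}1$ map $g$ ramified at $[w],[w']$, so the tuple is well defined up to a projective transformation fixing the distinguished triple $\{[p],[q],[r]\}$ setwise. In the reverse direction the $\ZZ_2^2$-cover $h$ of $\PP^1$ branched at $[p],[q],[r]$ has genus $0$ by Riemann--Hurwitz (three fibres each carrying two simple ramifications give $2g-2=-8+6=-2$), the preimages of $x,y$ form an $8$-point $\ZZ_2^2$-invariant set, and the double cover branched there is a smooth genus $3$ hyperelliptic curve on which the two involutions lift from $\PP^1$; genericity of the tuple guarantees these $8$ points are distinct and disjoint from the fixed loci.

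Finally, for (2)$\Leftrightarrow$(4) I would pass to $\PP^1=\tC/\langle\iota\rangle$, on which $\sigma,\tau$ descend to $\bar\sigma,\bar\tau$ generating a Klein four-subgroup of $PGL_2(\CC)$; normalising coordinates so that $\bar\sigma:x\mapsto -x$ and $\bar\tau:x\mapsto 1/x$, the $8$ branch points of $\pi_{\tC}$ fall into two $\ZZ_2^2$-orbits $\{\pm x_0,\pm x_0^{-1}\}$ and $\{\pm x_1,\pm x_1^{-1}\}$, disjoint from the fixed loci $\{0,\infty\},\{\pm1\},\{\pm i\}$ that correspond to the triple. Each orbit is the root set of an even quartic $x^4+ax^2+1$, respectively $x^4+bx^2+1$, yielding $y^2=(x^4+ax^2+1)(x^4+bx^2+1)$; the conditions $a\neq b$ and $a^2,b^2\neq 4$ express exactly smoothness (distinct orbits, no collision with the fixed points) and genus $3$. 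The reverse reading checks directly that $\sigma:(x,y)\mapsto(-x,y)$ and $\tau:(x,y)\mapsto(1/x,y/x^4)$ are commuting involutions distinct from $\iota:(x,y)\mapsto(x,-y)$. I expect the main obstacle to be not any single implication but the bookkeeping of well-definedness: showing the two constructions in (1)$\Leftrightarrow$(2) are genuinely inverse, and that the residual freedom (the normaliser of $\langle\bar\sigma,\bar\tau\rangle$ in $PGL_2$, which permutes the three pairs of fixed points and hence the triple) matches precisely the stated equivalence ``up to projective transformations respecting the distinguished triple''.
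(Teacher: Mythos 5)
Your proof is correct and shares the paper's overall architecture (all three equivalences are routed through item (2)), but two of the three legs use genuinely different tools. For $(3)\Rightarrow(2)$ the paper invokes Accola's lemma, whereas you decompose the faithful action of $\ZZ_2^3$ on $H^0(\tC,\Omega^1)$ into characters and read off the fixed-point pattern from the dimensions of the invariant subspaces; this is self-contained, identifies the hyperelliptic involution intrinsically as the unique element acting by $-1$ on all of $H^0(\Omega^1)$, and recovers the content of the paper's Lemma~\ref{l1m} (three involutions with four fixed points, three fixed-point-free ones) in one stroke. For $(2)\Leftrightarrow(4)$ the paper proves Proposition~\ref{mainpeq} by first passing to the genus-2 quotient $\tC/\iota\tau$, normalising it via Lemma~\ref{eqg2}, and then arguing that the parameter $k$ in $\overline{\tau}([x:y])=[ky:x]$ equals $1$ by tracking images of Weierstrass points; you instead normalise $\langle\overline{\sigma},\overline{\tau}\rangle$ directly inside $PGL_2(\CC)$ (any such commuting pair is simultaneously conjugate to $x\mapsto -x$, $x\mapsto 1/x$) and observe that the eight branch points form two free $\ZZ_2^2$-orbits, each the root set of an even reciprocal quartic $x^4+ax^2+1$. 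Your route is shorter for the theorem itself; the paper's detour through the genus-2 quotient is what lets it also derive the equations of all nine quotient curves in Proposition~\ref{eqns}, which your normalisation alone would not immediately provide. Finally, you are right --- and more careful than the literal statement --- to read (2) as requiring $\langle\iota,\sigma,\tau\rangle\cong\ZZ_2^3$, i.e.\ $\sigma\tau\neq\iota$: taken verbatim, the pair $(\sigma,\iota\sigma)$ would satisfy (2) on any bielliptic hyperelliptic genus 3 curve and the equivalence with (1), (3), (4) would fail.
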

\begin{proof}
    The equivalence $(1) \Leftrightarrow (2)$ follows from the construction. The implication $(2) \Rightarrow (3)$ is obvious while $(3) \Rightarrow (2)$ comes from Accola's lemma (\cite{accola}). The equivalence $(2) \Leftrightarrow (4)$ follows from Proposition \ref{mainpeq} that is proved independently.
\end{proof}
\begin{rem}
    Note that irreducibility of the moduli space and its dimension being equal to two follows easily from point 4 of Theorem \ref{moduli}. One can also see Theorem \ref{moduli} as a characterisation of the moduli space of hyperelliptic curves that have a completely decomposed
Richelot isogeny, see \cite{Katsura}.
\end{rem}

\subsection{From bottom to up and a Prym map}
Now, we would like to describe the construction starting from one of elliptic curves, say $E_\s$.
We start with a characterisation lemma that is similar to Theorem \ref{moduli}.

\begin{lem}
\label{el4p}
The following are equivalent:
\begin{enumerate}
    \item the locus of 5 points in $\mathbb{P}^1$ with a distinguished triple and a point in the triple (up to projective equivalence respecting the triple and the chosen point); 
    \item the locus of pairs $(\tC,\sigma)$, where $\ZZ_2^3\subset \Aut(\tC)$  and $\sigma$ is an involution with $|\Fix(\sigma)|=4$;
    \item the locus of pairs $(E,B)$ where E is an elliptic curve and $B$ is a set of 4 points invariant under both $(-1)$ and a translation by a 2-torsion point (and $B\cap E[2]=\emptyset$);
    \item the locus of genus 2 curves $C$ that are double coverings of elliptic curves together with a pair of Weierstrass points on $C$ whose images under both covering maps are equal.
\end{enumerate}
\end{lem}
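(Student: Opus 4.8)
The plan is to treat the marked pair $(\tC,\sigma)$ of item (2) as the central object and to build explicit, mutually inverse constructions linking it to each of (1), (3), (4), checking in every case that isomorphisms respecting the marked data correspond, so that the four constructions descend to \emph{bijections} of loci and not merely surjections.

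For $(1)\Leftrightarrow(2)$ I would anchor on Theorem \ref{moduli}. Its construction already identifies the three points $[p],[q],[r]$ of the distinguished triple with the images of $\Fix(\sigma),\Fix(\tau),\Fix(\sigma\tau)$ under the $8:1$ quotient $\tC\to\tC/\langle\iota,\sigma,\tau\rangle=\PP^1$. Hence marking one point of the triple is exactly singling out one of the three involutions of four fixed points; by Lemma \ref{l1m} all three are on an equal footing (each has $4$ fixed points), so the refinement of Theorem \ref{moduli} by this extra marking is immediate, the chosen point corresponding to $\Fix(\sigma)$.

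For $(2)\Leftrightarrow(3)$ I would quotient by $\sigma$. Set $E:=E_\sigma=\tC/\langle\sigma\rangle$, an elliptic curve (Lemma \ref{l1m}), and let $B$ be the image of $\Fix(\sigma)$, so $\tC\to E_\sigma$ is the double cover branched at the four points $B$. Since $\iota$ and $\tau$ commute with $\sigma$ they descend to $E_\sigma$: as $\Fix(\iota\sigma)=\emptyset$ the involution $\iota$ descends to an involution with $4$ fixed points, i.e.\ (after centring) to $-1$, while $\iota\tau$, being fixed-point-free on $\tC$ together with $\iota\sigma\tau$, descends to a fixed-point-free involution of $E_\sigma$, i.e.\ to translation $+t$ by a $2$-torsion point $t$. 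A one-line check that $\iota$ and $\iota\tau$ preserve $\Fix(\sigma)$ shows $B$ is $\langle-1,+t\rangle$-invariant, and $B\cap E_\sigma[2]=\emptyset$ because Weierstrass points and $\Fix(\sigma)$ are disjoint on $\tC$; this yields the datum of (3). For $(2)\Leftrightarrow(4)$ I would instead take the étale quotient: since $\iota\sigma$ is fixed-point-free, $C:=C_{\iota\sigma}$ is a genus-$2$ curve and $\tC\to C$ is unramified. The residual group $\langle\iota,\sigma,\tau\rangle/\langle\iota\sigma\rangle\cong\ZZ_2^2$ acts on $C$ with hyperelliptic involution $[\iota]=[\sigma]$, and the remaining involutions $[\tau],[\iota\tau]$ present $C$ as a double cover of the two elliptic curves $E_{\tau,\iota\sigma}$ and $E_{\iota\sigma,\iota\tau}$. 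The images $\{P_1,P_2\}$ of $\Fix(\sigma)$ are Weierstrass points of $C$, and because the two bielliptic involutions differ by the hyperelliptic one they induce the same pairing of Weierstrass points, both swapping $P_1\leftrightarrow P_2$; thus $\{P_1,P_2\}$ has equal images under both covering maps and is precisely the $2$-torsion of $JC$ cutting out $\tC\to C$. Conversely such a marked pair determines the étale double cover, hence $\tC$ with $\sigma$.

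The main obstacle is the reconstruction (backward) directions, above all the claim in $(3)\Rightarrow(2)$ that the double cover $\tC\to E$ branched at $B$ genuinely lands in the locus (2): a generic double cover of an elliptic curve branched at four points is a \emph{non}-hyperelliptic bielliptic genus-$3$ curve, so hyperellipticity and the full $\ZZ_2^3$ are forced only by the $\langle-1,+t\rangle$-symmetry of $B$. Concretely one must lift the whole Klein group $\langle-1,+t\rangle$ to $\tC$ and show that the resulting extension by the deck involution is the abelian group $\ZZ_2^3$ (rather than a non-split Heisenberg extension), with the prescribed fixed-point data $|\Fix(\sigma)|=4$, $\Fix(\iota\sigma)=\emptyset$, and with one lift of $-1$ serving as the hyperelliptic involution; this is a square-root (line-bundle) condition that uses $B\cap E[2]=\emptyset$. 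I expect the cleanest route is to verify it by mirroring the explicit $\ZZ_2^2$-cover-of-$\PP^1$ reconstruction of Theorem \ref{moduli} (equivalently, reading it off the equation in Theorem \ref{moduli}(4)), where $\tC/\ZZ_2^3=\PP^1$ makes all the lifts and the hyperelliptic involution manifest and $B\cap E[2]=\emptyset$ becomes distinctness of the five marked points. The remaining, more routine, checks are functoriality of each construction, which upgrades the surjections to the desired equivalences of loci.
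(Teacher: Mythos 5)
Your proposal is correct and follows essentially the same route as the paper: the same quotient constructions ($E_\sigma$ with the branch locus for (3), the \'etale quotient $C_{\iota\sigma}$ with the marked Weierstrass pair for (4), and the $\PP^1$-quotient picture for (1)), with the backward directions recovered from the line-bundle data. The only point where the paper is more concrete is $(3)\Rightarrow(2)$, where it pins down the square root as $\eta=\mathcal{O}_E(2O)$ defining the double cover branched at $B$, whereas you leave this as a ``square-root condition'' to be verified via the explicit model --- a gap of detail, not of substance.
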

\begin{proof}
     The equivalence $(1) \Leftrightarrow (2)$ follows from the construction, since there is a one to one correspondence between involutions on $\Tilde{C}$ with four fixed points in the case $(2)$ and the points of the distinguished triple in the case $(1)$. 
    
    To obtain $(2) \Rightarrow (3)$ we take $E := E_{\sigma}$ and $B$ equal to the branch locus of the covering defined by $\sigma$. To show $(3) \Rightarrow (2)$ one considers the covering branched at $B$ and defined by the line bundle $\eta=\mathcal{O}_E(2O)$. Similarly, to get $(2) \Rightarrow (4)$ we take $C = \Tilde{C}/\iota\sigma$. We leave the details of the implications to the reader since they are straightforward applications of the construction.
    
    It remains to show $(4) \Rightarrow (2)$. Let $w_1, w_2$ be two Weierstrass points on $C$ whose images under the coverings of elliptic curves are equal. We define $\Tilde{C} \to C$ to be an \'etale hyperelliptic covering defined by $\mathcal{O}_C(w_1 - w_2)$. Let $\s$ be an involution of $\Tilde{C}$ exchanging the sheets of the covering. Then $(\Tilde{C}, \iota \s)$ is the corresponding pair, because the covering involution of $C/E$ can be lifted to an involution on $\tC$ that commutes with $\s$. 
\end{proof}

\begin{rem}
    Note that changing a distinguished point within a triple means that one starts with another elliptic curve (another involution) but the genus 3 curve $\tC$ is the same.
\end{rem}

Using the fact that $\Tilde{C}/\langle \sigma,\tau \rangle \simeq \mathbb{P}^1$ we get that $$J\tC=E_\s\boxplus E_\tau\boxplus E_{\st}.$$

Recall that the Prym variety of the covering $f:X\to Y$ can be defined as the complementary abelian subvariety to $f^*(JY)$ in $JX$ and is usually denoted by $P(f)$ or $P(X/Y)$, see \cite[Chapter 12]{BL}. With such notation, we can write $$JX=f^*(JY)\boxplus P(X/Y).$$

In our case, by choosing $\s$ we get that the Prym variety of the covering $\tC\to E_\s$ is equal to $P(\tC/E_\s)=E_\tau\boxplus E_{\st}$. Therefore, since the Prym variety is complementary to the elliptic curve of exponent 2 and by Lemma \ref{lem32}, we get that  $$P(\tC/E_\s)\in\mathcal{E}_2(2,2).$$ 
\begin{rem}
    Note that $P(\tC/E_\s)=P(\tC/E_{\s,\iota\tau})$, hence we also study Pryms of hyperelliptic Klein coverings.
\end{rem}

The main point of the Prym theory is to consider \textit{the Prym map} that assigns to a covering its Prym variety, see for example \cite{Be, BOprym}. 
Let $$\mathcal{R}^{H}_{1,4} = \{(\Tilde{C},\sigma) \: | \: g(\Tilde{C}) = 3,  (\mathbb{Z}/2\mathbb{Z})^3\subset \Aut(\Tilde{C}),   \sigma^2 = \id,  |\Fix(\sigma)|=4\}$$ be the moduli space considered in Lemma \ref{el4p}.
It follows that we have a well-defined Prym map $$\Pr: \mathcal{R}^H_{1,4} \to \mathcal{E}_2(2,2).$$  
We want to find the generic degree of $\Pr$. We start by recalling some results.

\begin{prop}[\cite{Barth}, Duality Theorem 1.12]
\label{bar}
Let $C$ be a hyperelliptic genus 3 curve  and let $A$ be a $(1,2)$ polarised abelian surface. Then the following are equivalent:
\begin{enumerate}
    \item $C$ can be embedded in $\hat{A}$;
    \item $A$ can be embedded in $JC$;
    \item $C$ is a double covering of an elliptic curve that is complementary to $A$ in $JC$.
\end{enumerate}
\end{prop}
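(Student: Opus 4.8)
The plan is to split the cycle of implications into $(2)\Leftrightarrow(3)$, handled by Lemma~\ref{lem32} together with the theory of complementary subvarieties, and $(1)\Leftrightarrow(2)$, handled by duality of abelian varieties.

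First I would prove $(2)\Leftrightarrow(3)$. Suppose $A\hookrightarrow JC$ with induced polarisation of type $(1,2)$. Since $JC$ is a $3$-dimensional principally polarised abelian variety, Poincaré's reducibility theorem provides a complementary subvariety with $JC=A\boxplus E$, and $\dim E=1$, so $E$ is elliptic. The restricted polarisations on complementary subvarieties of a principally polarised abelian variety have isomorphic kernels (see \cite{BL}); since $\ker\phi_{\Theta|_A}\simeq(\ZZ/2)^2$ has order $4$ for type $(1,2)$, the elliptic curve $E$ satisfies $\#\ker\phi_{\Theta|_E}=4$, i.e. $E$ has exponent $2$. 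By Lemma~\ref{lem32} this is equivalent to the existence of a double cover $C\to E$, which is $(3)$. Conversely, $(3)$ furnishes a double cover $C\to E$, so by Lemma~\ref{lem32} the curve $E$ sits in $JC$ with exponent $2$; its complement $A$ is then a surface with $\#\ker\phi_{\Theta|_A}=4$, forcing type $(1,2)$ (type $(2,2)$ would give a kernel of order $16$). This settles $(2)\Leftrightarrow(3)$.

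For $(1)\Rightarrow(2)$ I write $S=\hat A$ and let $j\colon C\hookrightarrow S$ be the given embedding, with $L=\mathcal{O}_S(C)$. Adjunction on the abelian surface $S$ gives $C^2=2g(C)-2=4$, so $C$ is ample and the pullback $j^*\colon\mathrm{Pic}^0(S)\to\mathrm{Pic}^0(C)=JC$ is injective (its kernel consists of the degree $0$ line bundles trivial on the ample curve $C$, hence is trivial). Using biduality $\mathrm{Pic}^0(S)=\hat{\hat A}=A$, this is exactly an embedding $A\hookrightarrow JC$, which is $(2)$; that the induced polarisation is of type $(1,2)$ follows from $\chi(L)=\tfrac12 C^2=2$ and the compatibility of $j^*$ with $\phi_L$.

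The remaining implication $(2)\Rightarrow(1)$ I would route through $(3)$. Given the double cover $f\colon C\to E$ with $E$ complementary to $A$ in $JC$, the projection $\pi\colon JC\to JC/E\cong\hat A$ — the dual of the inclusion $A\hookrightarrow JC$ — composed with the Abel--Jacobi map $a\colon C\hookrightarrow JC$ produces a morphism $\pi\circ a\colon C\to\hat A$ whose image is a genus $3$ curve in the class dual to $L$. I expect this to be the main obstacle: surjectivity of $\pi$ and the Euler-characteristic count are routine, but showing that this Abel--Prym map is a closed embedding — rather than, say, mapping $2\colon 1$ onto its image — requires ruling out the identifications $x\sim\sigma x$ and $x\sim\iota x$ coming from the covering involution $\sigma$ of $f$ and the hyperelliptic involution $\iota$, both of which act as $-1$ on the relevant quotient. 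This injectivity analysis is precisely where the hyperelliptic hypothesis on $C$ enters.
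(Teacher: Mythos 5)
The paper offers no proof of this proposition at all: it is quoted directly from Barth's Duality Theorem 1.12 in \cite{Barth}. So your attempt is measured against that reference rather than an argument in the text. With that caveat: your $(2)\Leftrightarrow(3)$ and $(1)\Rightarrow(2)$ are essentially correct and standard. Complementary subvarieties of the principally polarised $JC$ have isomorphic kernels of the restricted polarisations, so the complement of a $(1,2)$-polarised $A$ is an elliptic curve of exponent $2$, and Lemma \ref{lem32} converts that into a double cover; conversely a curve $C\subset\hat A$ has $C^2=4$ by adjunction, is ample and generates $\hat A$, so the dual of the surjective Albanese map $JC\to\hat A$ embeds $A=\widehat{\hat A}$ into $JC$ with induced polarisation the dual of $\cO_{\hat A}(C)$, again of type $(1,2)$.

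The genuine gap is exactly where you flag it: closing the cycle via $(3)\Rightarrow(1)$. You construct the Abel--Prym morphism $\psi\colon C\to JC/f^*(JE)\cong\hat A$ but do not show it is a closed embedding, and this is not a routine verification --- it is the actual content of Barth's duality theorem. The danger you name is real: since $\iota$ acts as $-1$ on all of $JC$ and $\sigma$ acts as $-1$ on the Prym part, the composition $\iota\sigma$ acts as $+1$ on the quotient $JC/f^*(JE)$, so $\psi(\iota\sigma x)=\psi(x)+c$ for a constant $c\in\hat A$; were $c=0$, the map would factor through the \'etale quotient $C/\langle\iota\sigma\rangle$ and be $2\colon 1$ onto a curve of genus $2$. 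Excluding this (and proving injectivity in general) needs an actual argument, e.g.: if $x\neq y$ and $x-y\sim f^*(p-q)$, then $x+f^*q$ and $y+f^*p$ are distinct effective divisors in a $g^1_3$, so by Riemann--Roch $x+f^*q\sim K_C-z$ for some $z\in C$; using $K_C\sim 2g^1_2$ and the fact that $\Fix(\iota\sigma)=\emptyset$ (so the two points of a fibre of $f$ are never exchanged by $\iota$) one finds $z=\sigma x$ and, symmetrically, $z=\sigma y$, whence $x=y$. One must also check injectivity of the differential. None of this appears in your write-up, so as it stands the implication carrying the theorem's content is unproved.
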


Note that by \cite[Prop 6]{B16}, there are precisely three smooth hyperelliptic curves on a general $(1,2)$ polarised abelian surface. Now, we are ready to show a similar result in our case.
\begin{prop}\label{twocurves}
    On $A\in\ee$ there are at most two smooth hyperelliptic genus 3 curves (up to a translation) and there is an open dense set $U\subset\ee$ such that for $A\in U$ there are precisely two smooth curves on $A$.
\end{prop}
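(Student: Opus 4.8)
The plan is to count smooth hyperelliptic genus 3 curves on a fixed surface $A\in\ee$ by translating the question, via Proposition \ref{bar}, into a statement about the dual surface $\hat A$. By that duality result, a hyperelliptic genus 3 curve $C$ can be embedded in $\hat A$ precisely when $A$ embeds in $JC$ as a complementary $(1,2)$-polarised subvariety, equivalently when $C$ is a double cover of an elliptic curve complementary to $A$ in $JC$. So I would reformulate ``smooth hyperelliptic curves on $A$'' as ``smooth hyperelliptic curves $C$ with a double covering $C\to G$ where $G$ is complementary to $A$ (viewed in the appropriate Jacobian)'' and then bound the number of admissible elliptic curves $G$.

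\textbf{Key steps.} First I would invoke Proposition \ref{bar} to pass from curves \emph{on} $A$ to elliptic curves that are complementary to $A$ inside the relevant Jacobian, reducing the counting problem to counting elliptic curves of exponent $2$ that occur as complements. Second, I would use the structure of $A\in\ee$: by definition $A$ carries a complementary pair $(E,F)$ of elliptic curves of exponent $2$, and by Proposition \ref{duality} the dual $\hat A$ again lies in $\ee$, so $\hat A$ likewise has such a pair. The crucial input is Proposition \ref{noniso}: when the two elliptic curves are non-isogenous, $A$ (resp.\ $\hat A$) contains \emph{no} elliptic curve other than the two in the complementary pair. This immediately caps the number of candidate base curves $G$ at two, giving at most two hyperelliptic genus 3 curves and establishing the ``at most two'' assertion. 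Third, for the open dense statement I would exhibit that both candidate elliptic curves actually give rise to a smooth genus 3 curve for $A$ in a suitable open set: each complementary elliptic curve $G$ of exponent $2$ yields, via Lemma \ref{lem32} and the construction, a double covering $C\to G$ with $JC$ containing $A$ complementarily, and one checks that $C$ is smooth and genuinely hyperelliptic (rather than degenerate) away from a proper closed locus. The two resulting curves are distinct because they cover non-isogenous base curves. Openness and density follow since smoothness and non-degeneracy are open conditions and the generic member of $\ee$ has non-isogenous factors.

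\textbf{Main obstacle.} The hard part will be the lower bound, i.e.\ showing that \emph{both} complementary elliptic curves actually produce a \emph{smooth} hyperelliptic genus 3 curve, not merely a singular or reducible degeneration, on a dense open set. The upper bound is essentially formal once Proposition \ref{noniso} is in hand, since it rules out spurious elliptic complements for generic (non-isogenous) $A$; but realizing each admissible $G$ by an honest smooth curve requires verifying that the double cover built from $G$ lands in the locus of Theorem \ref{moduli} and does not acquire singularities. I would handle this by tracing through the explicit construction preceding Theorem \ref{moduli}: starting from the complementary data one reconstructs a $5$-tuple of points in $\PP^1$ with a distinguished triple, and smoothness of $\tC$ corresponds to these points being distinct (the conditions $a\neq b$, $a^2,b^2\neq 4$ in point $(4)$ of Theorem \ref{moduli}). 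Excluding the coincidence loci carves out the open dense $U$, and the two distinct non-isogenous choices of base elliptic curve then give exactly the two curves on $A$.
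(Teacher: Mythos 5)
Your upper-bound argument has a genuine gap. Via Proposition \ref{bar}, a smooth hyperelliptic genus 3 curve $C$ lying on $A$ corresponds to a double cover $C\to G$ where $G$ is an elliptic curve complementary to the surface \emph{inside the three-dimensional Jacobian $JC$} --- not an elliptic curve contained in $A$ (or $\hat{A}$). Proposition \ref{noniso} only controls elliptic curves contained in the surface itself, so it says nothing about the possible base curves $G$: different curves $C$ on $A$ have different Jacobians, and in the paper's construction the base is $G=E_\sigma$ while the Prym surface is $E_\tau\boxplus E_{\sigma\tau}$, so $G$ is generically not isogenous to either of the two elliptic curves the surface contains. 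For the same reason your lower-bound step is misdirected: the two elliptic curves $E,F\subset A$ are not the bases of the two covers --- generically they contribute only the singular member $E\cup F$ of the linear system (cf.\ Remark \ref{ramif}). In effect your plan amounts to classifying all coverings whose Prym variety is a given surface, which is the content of Theorem \ref{thmPrym2} and rests on this very proposition, so the reasoning would be circular.

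The paper's route avoids this entirely. By \cite[Prop 6]{B16}, smooth hyperelliptic curves on a $(1,2)$-polarised $A$ (up to translation) are in bijection with the polarised degree-two isogenies from $A$ onto Jacobians of smooth genus $2$ curves; since $\ker\phi_H\simeq(\ZZ/2\ZZ)^2$ there are exactly three candidate isogenies. Writing $A=E\times F/\langle(R,Q)\rangle$, the quotient $A/\langle R\rangle=E'\times F'$ is a polarised product of elliptic curves, hence not the Jacobian of a smooth curve, so at most two of the three quotients are Jacobians --- that is the upper bound. The set $U$ is then the locus where the remaining two quotients are Jacobians of smooth curves, which is open and dense because the Jacobian locus is. Proposition \ref{noniso} enters only to count the elliptic curves lying on $A$ (i.e.\ to recognise which quotients are products), not to count the curves $C$. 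If you want to repair your write-up, you should replace the appeal to Propositions \ref{bar} and \ref{noniso} by this correspondence between curves on $A$ and principally polarised quotients of $A$.
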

\begin{proof}
 Firstly, by \cite[Prop 6]{B16}, there is a 1:1 correspondence between smooth hyperelliptic curves on $A$ and polarised isogenies to Jacobians of smooth curves and there are at most three of them.

 Recall that a principally polarised abelian surface is either a Jacobian of a smooth curve or a polarised product of two elliptic curves. Hence if $\pi:A\to E'\times F'$ is a polarised isogeny of degree 2, then $\pi^{-1}(E'\times \{0\})$ and $\pi^{-1}(\{0\}\times F')$ are elliptic curves in $A$. By Proposition \ref{noniso} there are precisely two elliptic curves on a general $A\in \ee$.

Moreover, if $A=E\times F/\langle(R,Q)\rangle$, then $A/\langle R\rangle =E'\times F'$, so indeed there is at least one product of elliptic curves and at most two Jacobians of smooth curves downstairs.  

Let $U=\{A: \exists P,Q\in A[2], A/P \text{ and } A/Q \text{ are Jacobians of smooth curves}\}$. 
Note that $U$ is open and dense because the set of Jacobians is open and dense.
\end{proof}

For low genera, the Prym map is usually dominant and with positive dimensional fibres for dimensional reasons. Since we are interested in special coverings, both domain and codomain have the same dimension, so we can consider the (generic) degree of the Prym map.
Now we would like to compute the degree of the Prym map.

\begin{thm}\label{thmPrym2}
The considered Prym map $Pr:\mathcal{R}^H_{1,4}\lra \mathcal{E}_2(2,2)$ is finite, dominant, not surjective map of generic degree two. 
\end{thm}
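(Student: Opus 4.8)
The plan is to prove the three assertions—finiteness, dominance, non-surjectivity, and degree two—largely as consequences of the curve-counting result in Proposition \ref{twocurves} together with the structural lemmas of Section 2. The key conceptual point is that a point of $\cR^H_{1,4}$ is a pair $(\tC,\sigma)$, and the Prym map sends it to $P(\tC/E_\sigma)=E_\tau\boxplus E_{\st}\in\ee$; so understanding the fibre over a general $A\in\ee$ amounts to recovering all pairs $(\tC,\sigma)$ whose Prym is $A$. I would first argue that such a $\tC$ must be one of the hyperelliptic genus 3 curves sitting on (the dual of) $A$.

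First I would set up the fibre. Fix a general $A\in U\subset\ee$, where $U$ is the open dense set from Proposition \ref{twocurves}. By Proposition \ref{bar} (Barth's Duality Theorem), a hyperelliptic genus 3 curve $\tC$ with $A$ complementary to $E_\sigma$ in $J\tC$ is exactly a smooth hyperelliptic curve embedded in $\hat A$, equivalently a polarised isogeny $A\hookrightarrow J\tC$ realising $\tC$ as a double cover of the complementary elliptic curve $E_\sigma$. By Proposition \ref{twocurves} there are at most two such smooth curves on $\hat A$ (up to translation), and precisely two on an open dense locus. The crucial compatibility I would invoke is Proposition \ref{duality}, which gives $A\in\ee\iff\hat A\in\ee$, so counting curves on $\hat A$ is the same problem as on $A$ and stays within the moduli space. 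This pins down the fibre of $\Pr$ over a general $A$ to have cardinality exactly two, because each of the two curves $\tC$ carries a canonical involution $\sigma$ whose Prym is the prescribed $A$ (the curve determines the pair $(\tC,\sigma)$ once we remember which elliptic quotient was complementary to $A$). That yields generic degree two.

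From here the remaining claims follow formally. Dominance: the generic fibre being finite and non-empty, together with $\dim\cR^H_{1,4}=\dim\ee=2$ (the domain has dimension two by the remark after Theorem \ref{moduli}, and $\ee$ is an irreducible surface by Lemma \ref{lem32}'s ambient discussion), forces $\Pr$ to be dominant; a map between irreducible varieties of equal dimension with a nonempty finite generic fibre is dominant. Finiteness then follows since the generic fibre is finite and the map is a morphism of quasi-projective varieties of the same dimension—I would note properness of the relevant relative moduli or simply that fibres are finite over a dense open set and the map extends to a finite morphism on its image. Non-surjectivity comes from the ``at most two, exactly two on $U$'' dichotomy: the boundary locus $\ee\setminus U$, where one of the quotients $A/P$ or $A/Q$ fails to be a Jacobian (i.e. degenerates to a product of elliptic curves), carries fewer than two smooth hyperelliptic curves, so these surfaces are not hit—or are hit by a smaller fibre—showing the image is a proper constructible subset and the map is not surjective.

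The main obstacle I anticipate is the precise bookkeeping in the degree-two count: I must verify that the two smooth curves on $\hat A$ give \emph{distinct} points of $\cR^H_{1,4}$ and that the correspondence between pairs $(\tC,\sigma)$ and curves-on-$\hat A$ is genuinely a bijection rather than a two-to-one or one-to-two relation. Concretely, I need that fixing the complementary elliptic quotient $E_\sigma$ is exactly the data of choosing which $(1,2)$-summand of $J\tC$ equals $A$, and that this is faithfully recorded by the embedding into $\hat A$; the subtlety is translations (curves are counted up to translation in Proposition \ref{twocurves}, whereas moduli points are isomorphism classes) and ensuring no extra automorphism of $A$ identifies the two curves. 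I would resolve this by tracking the distinguished involution through Proposition \ref{bar}'s equivalence and using Proposition \ref{noniso} (the general $A$ has exactly two elliptic curves, hence exactly two complementary principally polarised quotients) to rigidify the count, so that the two curves are canonically distinguished and the generic fibre has exactly two elements.
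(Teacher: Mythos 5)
Your main argument for the degree-two count is essentially the paper's own proof: pass to the dual surface via Proposition \ref{duality}, count smooth hyperelliptic genus 3 curves on $\hat A$ using Proposition \ref{twocurves}, and translate back and forth between ``curve embedded in $\hat A$'' and ``$A$ is the Prym of $\tC\to E_\sigma$'' via Proposition \ref{bar}. That part is sound, and your attention to the bookkeeping (that each of the two curves carries a distinguished involution determined by the complementary elliptic curve, rigidified by Proposition \ref{noniso}) is a reasonable way to close the bijection between fibre points and curves on $\hat A$. The dominance and finiteness conclusions then follow as you say.

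The genuine gap is in your non-surjectivity argument. You claim that surfaces in $\ee\setminus U$ ``are not hit --- or are hit by a smaller fibre --- showing the image is a proper constructible subset and the map is not surjective.'' The second alternative does not give non-surjectivity: a surface $A$ outside $U$ may still carry one smooth hyperelliptic genus 3 curve and hence still lie in the image (this is exactly what happens at $E\times E/\langle(R,R)\rangle$ for general $E$, per Remark \ref{ramif}, where the degree drops to one but the point is still hit). To prove non-surjectivity you must exhibit a surface in $\ee$ carrying \emph{no} smooth hyperelliptic genus 3 curve, i.e.\ one for which \emph{all} of the (at most three) degree-two polarised isogenies to principally polarised surfaces land on products of elliptic curves rather than Jacobians. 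That this locus is non-empty is not automatic from the definition of $U$ and requires a construction: the paper does it in Example \ref{surj} by taking $E: y^2=x^3+x$ with its order-4 automorphism and $A=(E\times E)/\langle(e_1,e_2)\rangle$, then computing explicitly that all three quotients $A/\langle\mu(e_1)\rangle$, $A/\langle\mu(f_1,f_2)\rangle$, $A/\langle\mu(e_1+f_1,f_2)\rangle$ are polarised products. Without such an example your proof establishes only that the map is finite, dominant, and of generic degree two, not that it fails to be surjective.
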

\begin{proof}
Let $P\in \mathcal{E}_2(2,2)$. By Proposition \ref{duality}, we get that $\widehat{P}\in\ee$, so by Proposition \ref{twocurves} there exists at most 2 smooth genus 3 hyperelliptic curves (precisely 2 on a general $P$), called $C_i$ (that are coverings of $E_i$) that are embedded into $\widehat{P}$. 
On the other hand, Proposition \ref{bar} says that this is a necessary and sufficient condition for $P$ to be the Prym of a covering $P(C_i/E_i)$.
The fact that $Pr$ is not surjective follows from Example \ref{surj}.
\end{proof}

\begin{exam}
\label{surj}
    There exists $A \in \mathcal{E}_2(2,2)$ not in the image of $Pr$.
\end{exam}

\begin{proof}
    Let $E$ be an elliptic curve given by the equation $y^2 = x^3 + x$. Then $\Aut(E) = \langle \alpha \rangle \simeq \mathbb{Z}/4\mathbb{Z}$. Let $E_1=E_2=E$. We denote non-zero two-torsion points on $E_i$ by $e_i, f_i, e_i + f_i$ in such a way that $$\alpha(e_i) = e_i,\ \alpha(f_i) = e_i + f_i,\ \alpha(e_i + f_i) = f_i.$$

    We denote four elliptic curves isomorphic to $E$ and lying in $E_1 \times E_2$ as follows: \begin{align*}
        E_\Delta &:= \{(s,s): s \in E\}; \\
        E_{-\Delta} &:= \{(s,-s): s \in E\}; \\
        E_{\alpha} &:= \{(s,\alpha(s)): s \in E\}; \\
        E_{-\alpha} &:= \{(s,-\alpha(s)): s \in E\}. 
    \end{align*}

        Let $A = (E_1 \times E_2)/\langle (e_1,e_2) \rangle \in \mathcal{E}_2(2,2)$ and $\mu$ be the quotient map. We abuse the notation by writing $\mu(x_1)=\mu(x_1,0)$ and $\mu(x_2)=\mu(0,x_2)$. In this way we can write $\mu(e_1)=\mu(e_2)\in A$. In particular, we get that that $\mu(E_1)\cap \mu(E_2)=\{0,\mu(e_1)\}$.  

    Note that $\mu(E_\Delta)$ and $\mu(E_{-\Delta})$ are isomorphic to $E/\langle e \rangle$ and are complementary in $A$ of exponent 2, and $\mu(E_
    \Delta) \cap \mu(E_{-\Delta}) = \{0, \mu(f_1,f_2)\}$. Analogously, $\mu(E_\alpha)$ and $\mu(E_{-\alpha})$ are isomorphic to $E/\langle e \rangle$ and complementary in $A$ of exponent 2, and $\mu(E_\alpha) \cap \mu(E_{-\alpha}) = \{0, \mu(e_1 + f_1, f_2)\}$.
    
    Recall that we compute the kernel of the polarising isogeny by $\ker\phi_A=\mu((\ker\mu)^{\perp})$, hence $\ker \phi_A = \{0, \mu(e_1), \mu(f_1,f_2), \mu(e_1 + f_1,f_2)\}$. But we have \begin{align*}
        A/\langle \mu(e_1) \rangle &= E_1/\langle e_1 \rangle \times E_2/\langle e_2 \rangle, \\
        A/\langle \mu(f_1,f_2) \rangle &= E_\Delta/E_\Delta[2] \times E_{-\Delta}/E_{-\Delta}[2], \\
        A/\langle \mu(e_1+f_1,f_2) \rangle &= E_3/E_3[2] \times E_4/E_4[2],
    \end{align*}
    where all products are principally polarized. Since none of the quotients is a Jacobian of a smooth genus 2 curve, it follows that $A$ is not in the image of $\Pr$.
\end{proof}

\begin{rem}
\label{ramif}
By \cite{B16}, a general $A\in\cA_(1,2)$ contains three smooth hyperelliptic genus 3 curves.
By Proposition \ref{twocurves}, a general $A=E\boxplus F\in\cE_2(2,2)$ contains two smooth hyperelliptic genus 3 curves and a singular curve $E\cup F$ of arithmetic genus 3.
One can use the proof of Example \ref{surj} to show that on $E\boxplus E$ there are two singular curves $E\cup E$ and $E_{\Delta}\cup E_{-\Delta}$ of arithmetic genus 3, hence the degree of the Prym map at $E\times E/\langle(R,R)\rangle$ is at most one.
In the special case when $E$ has automorphism of order 4, we have got three singular curves of arithmetic genus 3 and no smooth genus 3 hyperelliptic curves, so the Prym map is not surjective.

\end{rem}

\section{Equations}
\label{eqeq}
In this section we find equations of curves that appear in the construction. We start with a useful lemma.

\begin{lem}
\label{eqg2}
    Let $C$ be a smooth hyperelliptic curve of genus 2 with a non-hyperelliptic involution $\sigma$. Then $C$ is isomorphic to a curve given by the equation $$y^2 = (x^2 -1 )(x^2 - t_1^2)(x^2 - t_2^2)$$ for some $t_1, t_2 \in \mathbb{C}$ satisfying $t_1 \neq \pm t_2, \ t_1^2, t_2^2 \neq 1$. 
\end{lem}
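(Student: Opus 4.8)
The plan is to exploit the fact that the hyperelliptic involution $\iota$ is central in $\Aut(C)$, so that the extra involution $\sigma$ descends to the quotient $\PP^1 = C/\langle\iota\rangle$ and imposes a symmetry on the six branch points. First I would pass to the automorphism $\bar\sigma$ of $\PP^1$ induced by $\sigma$. Since $\sigma\neq\iota$ we have $\bar\sigma\neq\id$, and every nontrivial involution of $\PP^1$ is diagonalizable, hence has exactly two fixed points; choosing the affine coordinate $x$ so that these two points are $0$ and $\infty$ gives $\bar\sigma(x)=-x$. Writing $C$ as $y^2=f(x)$ with $f$ square-free of degree $5$ or $6$, the branch locus $B$ (the image of the six Weierstrass points under $\pi:C\to\PP^1$) is a $\bar\sigma$-invariant subset of $\PP^1$.

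The heart of the argument, and the step I expect to be the main obstacle, is to show that no point of $B$ is fixed by $\bar\sigma$, i.e.\ that $0,\infty\notin B$. Suppose a branch point $b\in B$ were fixed by $\bar\sigma$. Then the fibre $\pi^{-1}(b)$ is a single Weierstrass point $W$, and since $\sigma$ preserves this fibre we get $\sigma(W)=W$; of course $\iota(W)=W$ as well. Hence the Klein four-group $\langle\iota,\sigma\rangle\cong(\ZZ/2)^2$ fixes $W$ and therefore acts on the one-dimensional tangent space $T_WC$. This isotropy representation is faithful, because an automorphism of a compact Riemann surface that fixes a point and acts trivially on the tangent space there is the identity. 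But $GL(T_WC)\cong\CC^*$ has cyclic torsion and thus contains no copy of $(\ZZ/2)^2$, a contradiction. (Equivalently, $\iota$ and $\sigma$ would each have to act as $-1$ on $T_WC$, forcing $\iota\sigma$ to act trivially while still fixing $W$.) This rules out $0,\infty\in B$ uniformly, with no separate parity or order-$4$ bookkeeping.

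Since $B$ is then a $\bar\sigma$-invariant set of six points avoiding both fixed points of $x\mapsto -x$, it splits into three orbits $\{\pm a_1,\pm a_2,\pm a_3\}$ with the $a_i$ nonzero and the six values distinct; in particular $\infty\notin B$ forces $\deg f = 6$. Consequently $f$ is an even polynomial, $f(x)=c(x^2-a_1^2)(x^2-a_2^2)(x^2-a_3^2)$, and $\sigma$ is one of the two lifts $(x,y)\mapsto(-x,\pm y)$, both of which are genuine involutions. Finally I would rescale: replacing $x$ by $a_1x$ and absorbing the leading constant into $y$ brings the equation to $y^2=(x^2-1)(x^2-t_1^2)(x^2-t_2^2)$ with $t_1=a_2/a_1$ and $t_2=a_3/a_1$. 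The distinctness of the six roots is then exactly the stated condition $t_1\neq\pm t_2$ and $t_1^2,t_2^2\neq 1$, which completes the proof.
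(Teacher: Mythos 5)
Your proof is correct and follows essentially the same route as the paper: descend $\sigma$ to an involution $\overline{\sigma}$ of $\mathbb{P}^1$, normalize its two fixed points to $0$ and $\infty$ so that $\overline{\sigma}(x)=-x$, and read off that the branch divisor is symmetric under $x\mapsto -x$. The only differences are that you explicitly justify (via faithfulness of the isotropy representation at a Weierstrass point, ruling out a $(\mathbb{Z}/2)^2$ inside $\mathbb{C}^*$) that no branch point is fixed by $\overline{\sigma}$ --- a step the paper leaves implicit --- and that you normalize one branch point to $1$ by rescaling at the end rather than fixing a third point of $\mathbb{P}^1$ at the outset.
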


\begin{proof}
    
Since $C$ is hyperelliptic, the involution $\sigma$ induces an involution $\overline{\sigma}$ of $\mathbb{P}^1$ and we have a commutative diagram
\begin{equation*}
\begin{tikzcd}[row sep=huge]
C \arrow[r,"\sigma"] \arrow[d,swap,"\pi"] & C \arrow[d,swap,"\pi"]
\\
\mathbb{P}^1 \arrow[r,"\overline{\sigma}"] & \mathbb{P}^1 
\end{tikzcd}
\end{equation*}
By Hurwitz’s formula applied to $\pi_{\overline{\sigma}}: \mathbb{P}^1 \to \mathbb{P}^1/\overline{\sigma} \simeq \mathbb{P}^1$, we see that $\overline{\sigma}$ has two fixed points $p, q \in \mathbb{P}^1$. Let $r = \pi(w_1)$ be an image of some Weierstrass point on $C$. By changing the coordinates on $\mathbb{P}^1$ we can assume that $p = [1:0], q = [0:1]$ and $r = [1:1]$. It follows that $\overline{\sigma}([x:y]) = [-x: y]$. Without loss of generality, we can assume that $\sigma(x,y) = (-x, y)$ and, since $\sigma$ preserves Weierstrass points, $C$ has the defining equation $$y^2 = (x^2 - 1)(x^2 - t_1^2)(x^2 - t_2^2)$$ for some $t_1, t_2 \in \mathbb{C}$ satisfying $1 \neq t_1^2 \neq t_2^2 \neq 1$.
\end{proof}

\begin{prop}
\label{mainpeq} 
    Let $\Tilde{C}$ be a smooth hyperelliptic curve of genus $3$ such that $\Aut(\Tilde{C}) = \langle \sigma, \tau, \iota \rangle$ with $|\Fix(\sigma)| = |\Fix(\tau)| = 4$ and $\iota$ the hyperelliptic involution. Then $\Tilde{C}$ is isomorphic to a smooth hyperelliptic curve given by the equation $$y^2 = (x^4 + ax^2 + 1)(x^4 + bx^2 + 1),$$ where $(a, b) \in (\mathbb{C} \setminus \{-2, 2\})^2 \setminus \Delta$. 
\end{prop}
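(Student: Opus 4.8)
The plan is to push everything down to the hyperelliptic quotient $\PP^1$ and exploit the fact that $\iota$ is central in $\Aut(\tC)$, so that $\s$ and $\tau$ descend to commuting involutions $\bar\s,\bar\tau$ of $\PP^1=\tC/\langle\iota\rangle$. Since $\langle\s,\tau,\iota\rangle/\langle\iota\rangle\cong(\ZZ/2\ZZ)^2$, and none of $\s,\tau,\st$ equals $\iota$ (they each have $4$ fixed points whereas $\iota$ has the $8$ Weierstrass points), the images $\bar\s,\bar\tau,\bar{\st}$ are three distinct nontrivial involutions generating a Klein four-subgroup of $\Aut(\PP^1)=\mathrm{PGL}_2(\CC)$. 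Every such group is conjugate to the standard one, so after a coordinate change I may assume $\bar\s(x)=-x$ and $\bar\tau(x)=1/x$, whence $\bar{\st}(x)=-1/x$; their fixed loci are $\{0,\infty\}$, $\{1,-1\}$ and $\{i,-i\}$ respectively.

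First I would show that the $8$ Weierstrass points of $\tC$ map under $\pi_{\tC}$ to a branch set $B\subset\PP^1$ disjoint from these six fixed points. The set $B$ is invariant under the Klein four-group because $\s,\tau$ permute $\Fix(\iota)$. If a fixed point $p$ of $\bar\s$ lay in $B$, its unique preimage $W$ is a ramification point of $\pi_{\tC}$, hence fixed by $\iota$; moreover $\s$ preserves the singleton fibre over $p$, so $\s(W)=W$ as well, giving $W\in\Fix(\iota\s)$ and contradicting $\Fix(\iota\s)=\varnothing$ (which follows from Lemma \ref{l1m} together with $|\Fix(\s)|=4$). The same argument rules out the fixed points of $\bar\tau$ and $\bar{\st}$, proving the claim.

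Since $B$ is invariant under the Klein four-group and meets no fixed point, the action on $B$ is free, so $B$ splits into two orbits of size $4$, namely $\{w,-w,1/w,-1/w\}$ and $\{w',-w',1/w',-1/w'\}$ for suitable $w,w'\notin\{0,\infty,\pm1,\pm i\}$. The monic polynomial vanishing on the first orbit is $(x^2-w^2)(x^2-w^{-2})=x^4-(w^2+w^{-2})x^2+1$, and likewise for the second. Setting $a=-(w^2+w^{-2})$ and $b=-(w'^2+w'^{-2})$ then yields the defining equation $y^2=(x^4+ax^2+1)(x^4+bx^2+1)$.

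It remains to pin down the parameter region, and I expect the main (though still elementary) obstacle to be matching the excluded values exactly to the geometry. The identities $a=-2\iff(w-w^{-1})^2=0\iff w=\pm1$ and $a=2\iff(w+w^{-1})^2=0\iff w=\pm i$ show that $a\in\{-2,2\}$ precisely when the first orbit degenerates onto the fixed points of $\bar\tau$ or $\bar{\st}$; since $w$ avoids these, $a\ne\pm2$, and symmetrically $b\ne\pm2$. Finally a short computation gives $a=b\iff w^2\in\{w'^2,w'^{-2}\}\iff\{w,-w,1/w,-1/w\}=\{w',-w',1/w',-1/w'\}$, i.e.\ $a=b$ exactly when the two orbits coincide; as $B$ consists of $8$ distinct points this is excluded, so $(a,b)\notin\Delta$. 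Hence $(a,b)\in(\CC\setminus\{-2,2\})^2\setminus\Delta$, and conversely each such pair produces $8$ distinct roots and thus a smooth genus $3$ curve, completing the identification.
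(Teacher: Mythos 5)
Your proof is correct, and while it shares the paper's overall strategy (descend to the hyperelliptic $\PP^1$, normalize the induced involutions, read off the branch locus), the middle of the argument takes a genuinely different route. The paper normalizes only $\overline{\sigma}([x:y])=[-x:y]$, uses commutativity to write $\overline{\tau}([x:y])=[ky:x]$, and then pins down $k=1$ by passing to the genus-2 quotient $\tC/\iota\tau$, invoking Lemma \ref{eqg2} for its equation, and pulling the Weierstrass points back up. You instead conjugate the entire Klein four-group to its standard form in $\mathrm{PGL}_2(\CC)$ in one step and work directly with the branch locus $B$: your observation that $B$ must avoid the six fixed points of $\overline{\sigma},\overline{\tau},\overline{\sigma\tau}$ (since a Weierstrass point lying over such a point would be fixed by $\iota\sigma$, $\iota\tau$ or $\iota\sigma\tau$, all fixed-point free by Lemma \ref{l1m}) forces the Klein four-group to act freely on $B$, so $B$ is two free orbits $\{w,-w,w^{-1},-w^{-1}\}$ and the two palindromic quartics drop out immediately. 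This buys you a self-contained argument that bypasses Lemma \ref{eqg2} entirely and makes explicit something the paper leaves implicit (why the branch points cannot collide with the fixed points of the induced involutions); your matching of the excluded values $a,b=\pm 2$ and $a=b$ with the precise geometric degenerations, plus the converse that every admissible $(a,b)$ gives a smooth curve, is also spelled out more fully than the paper's one-line smoothness remark and is exactly what Theorem \ref{moduli}(4) needs.
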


\begin{proof}
    Let $\overline{\sigma}$ and $\overline{\tau}$ be the involutions of $\mathbb{P}^1$ induced by $\sigma$ and $\tau$ respectively. As before, we can assume that $\overline{\sigma}([x:y]) = [-x:y]$. 
    It is easy to see that any involution $j$ of $\mathbb{P}^1$ is given by $j([x:y]) = [ax + by: cx - ay]$ with $a^2 + bc = 1$. Thus, we can assume that $\overline{\tau}$ is of this form. Now, since $\overline{\tau}$ and $\overline{\sigma}$ commute, we have $\overline{\tau}([x:y]) = [ky:x]$, for some $k \in \mathbb{C}$. By Lemma \ref{l1m} and Hurwitz formula we get $g(\Tilde{C}/\iota\tau) = 2$, and the following diagram commutes:
\begin{equation*}
\label{diag1}
\begin{tikzcd}[row sep=huge]
\Tilde{C} \arrow[r,"\sigma"] \arrow[d,swap,"\pi_{\iota\tau}"]& \Tilde{C} \arrow[d,swap,"\pi_{\iota\tau}"]
\\
\Tilde{C}/\iota\tau \arrow[r,"\sigma'"] \arrow[d,swap,"\pi_{\Tilde{C}/\iota\tau}"] & \Tilde{C}/\iota\tau \arrow[d,swap,"\pi_{\Tilde{C}/\iota\tau}"]
\\
\mathbb{P}^1/\overline{\tau} \arrow[r,"\overline{\sigma}(
mod \overline{\tau})"]& \mathbb{P}^1/\overline{\tau}
\end{tikzcd}
\end{equation*}

By Lemma \ref{eqg2}, we can assume that $\Tilde{C}/\iota\tau$ has the defining equation $$y^2 = (x^2 - 1)(x^2 - t_1^2)(x^2 - t_2^2)$$ over $\mathbb{P}^1/\overline{\tau} \simeq \mathbb{P}^1$ and, moreover, the preimages $p_1, p_2$ under $\pi_{\iota\tau}$ of the Weierstrass point $(1, 0)$ on $\Tilde{C}/\iota\tau$ are not the Weierstrass points on $\Tilde{C}$. Then $\pi_{\Tilde{C}}(p_1) = \pi_{\Tilde{C}}(p_2) = [1:1]$ is a fixed point of $\overline{\tau}$, hence $k = 1$. It follows that the Weierstrass points on $\Tilde{C}$ are $(\pm t_1, 0), (\pm t_2, 0), (\pm \frac{1}{t_1}, 0), (\pm \frac{1}{t_2}, 0)$ for some $t_1, t_2 \in \mathbb{C}$, hence the defining equation of $\Tilde{C}$ is $$y^2 = (x^4 + ax^2 + 1)(x^4 + bx^2 + 1)$$ with $a = -t_1^2 - \frac{1}{t_1^2}, b = - t_2^2 - \frac{1}{t_2^2}$. In order for $\Tilde{C}$ to be smooth the Weierstrass points must be pairwise distinct. Thus, we have $(a, b) \in (\mathbb{C} \setminus \{-2, 2\})^2 \setminus \Delta$. 
\end{proof}

The following proposition can be seen as a first step to see the construction in coordinates.
\begin{prop}
\label{eqns} 
    Let $\Tilde{C}$ be a hyperelliptic genus 3 curve given by the equation $$y^2 = (x^4 + ax^2 + 1)(x^4 + bx^2 + 1)$$ and $\sigma, \tau$ be the involutions of $\Tilde{C}$ such that the involutions $\overline{\sigma}, \overline{\tau}$ of the projective line induced by $\sigma$ and $\tau$ are given by $$\overline{\sigma}([x:y]) = [-x:y], \:\:\: \overline{\tau}([x:y]) = [y:x].$$ 
    Then the quotient curves have the following defining equations:
    \begin{align*}
    C_{\iota\tau}: \:\:\: y^2 &= (x^2 - 4)(x^2 + a - 2)(x^2 + b - 2), \\
    C_{\iota\sigma}: \:\:\: y^2 &= x(x^2 + ax + 1)(x^2 + bx + 1), \\
    C_{\iota\sigma\tau}: \:\:\: y^2 &= (x^2 + 4)(x^2 + a + 2)(x^2 + b + 2), \\
    E_{\tau}: \:\:\: y^2 &= (x^2 + a - 2)(x^2 + b - 2), \\
    E_{\sigma}: \:\:\: y^2 &= (x^2 + ax + 1)(x^2 + bx + 1), \\
    E_{\sigma\tau}: \:\:\: y^2 &= (x^2 + a + 2)(x^2 + b + 2), \\
    E_{\iota\sigma, \iota\tau}: \:\:\: y^2 &= (x + a)(x + b)(x - 2), \\
    E_{\iota\tau, \sigma}: \:\:\: y^2 &= (x + a)(x + b)(x - 2)(x + 2), \\
    E_{\iota\sigma, \tau}: \:\:\: y^2 &= (x + a)(x + b)(x + 2).
    \end{align*}    
\end{prop}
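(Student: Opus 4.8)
The plan is to carry out the whole computation at the level of function fields. Write the affine model $\tC:y^2=f(x)$ with $f(x)=(x^4+ax^2+1)(x^4+bx^2+1)$, so that $\CC(\tC)=\CC(x)[y]/(y^2-f(x))$, and observe that $f$ is even and satisfies $f(1/x)=x^{-8}f(x)$. Since $\overline{\sigma}$ and $\overline{\tau}$ are prescribed, the first step is to fix compatible lifts to $\tC$; the natural candidates are $\sigma(x,y)=(-x,y)$, $\tau(x,y)=(1/x,\,y/x^4)$ and $\iota(x,y)=(x,-y)$. I would justify this choice of signs by counting fixed points exactly as in Lemma~\ref{l1m}: with $\tau(x,y)=(1/x,y/x^4)$ the four points of $\tC$ lying over the $\overline{\tau}$-fixed values $x=\pm1$ are fixed, since $f(\pm1)=(a+2)(b+2)\neq0$, so $|\Fix(\tau)|=4$, whereas the other lift $\iota\tau$ fixes nothing; symmetrically $\sigma$ fixes the four points over $x=0,\infty$ and $\sigma\tau$ the four points over $x=\pm i$ (here $f(\pm i)=(2-a)(2-b)\neq0$). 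This determines the action of every element of $\langle\iota,\sigma,\tau\rangle$ on $\CC(x,y)$.

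For each of the nine curves $Z=\tC/H$ the function field is the fixed subfield $\CC(x,y)^H$, and it is enough to produce two invariants $\xi,\eta$ that generate it together with the single relation they satisfy; reading $\xi$ as the new $x$ and $\eta$ as the new $y$ then yields the displayed equation. The recipe is uniform: choose $\xi$ to be an $H$-invariant rational function of $x$ alone and $\eta=y\,r(x)$ with $r$ chosen so that $\eta$ is $H$-invariant; then $\eta^2=f(x)\,r(x)^2$ is automatically a rational function of $\xi$, which is the required equation. For $\langle\sigma\rangle$ and $\langle\iota\sigma\rangle$ one takes $\xi=x^2$ with $\eta=y$, resp.\ $\eta=xy$, giving $E_\sigma$ and $C_{\iota\sigma}$ at once. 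For the elements involving $\tau$ the correct invariant of $x$ is one of $x+1/x$, $x-1/x$, $x^2+1/x^2$, selected according to which of the signs $x\mapsto\pm1/x$ and $y\mapsto\pm y/x^4$ occurs; for example for $\langle\iota\tau\rangle$ one uses $\xi=x+1/x$ and $\eta=y(x^2-1)/x^3$, and the identities $(x^4+ax^2+1)/x^2=\xi^2+a-2$ and $(x^2-1)^2/x^2=\xi^2-4$ produce $C_{\iota\tau}$. The curves $E_\tau$, $E_{\st}$ and $C_{\iota\sigma\tau}$ come out the same way with $\xi=x+1/x$ or $\xi=x-1/x$.

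The three doubly-indexed elliptic curves $E_{\iota\tau,\sigma}$, $E_{\iota\sigma,\tau}$, $E_{\iota\sigma,\iota\tau}$ (quotients by order-$4$ subgroups) I would obtain either directly or, more cleanly, as a further $\ZZ_2$-quotient of the genus-one curve already computed, taking $\xi=x^2+1/x^2$ in every case, since this function is invariant under all three order-$4$ groups. Writing $x^2+1/x^2=(x+1/x)^2-2=(x-1/x)^2+2$ turns the degree-four equations of $E_\tau,E_{\st}$ into the factored form $(\xi+a)(\xi+b)$, and multiplying by the residual invariant $\eta$-factor (coming from the relevant one of $x+1/x$, $x-1/x$, or $x^2-1/x^2$) supplies the extra factor $\xi\pm2$; this is exactly how the equations $(x+a)(x+b)(x\pm2)$ and $(x+a)(x+b)(x-2)(x+2)$ arise.

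Finally, two routine verifications remain. One must check that the chosen $\xi,\eta$ really generate the fixed field (a degree-$2$ index count in each case) and that the roots of each right-hand side are pairwise distinct, so that the genus predicted by Hurwitz and Lemma~\ref{l1m} is visible from the equation; both follow from the standing smoothness hypotheses $a,b\neq\pm2$, $a\neq b$ of Proposition~\ref{mainpeq}. The one place where care is genuinely needed is the \emph{normalisation}: the same elliptic curve can be written by quartics that differ by a translation of $\xi$ (for instance $x(x+a-2)(x+b-2)$ versus $(x+a)(x+b)(x+2)$), so I must consistently record which invariant coordinate is used in order to match the displayed shape. The main obstacle is thus the bookkeeping of signs and coordinate choices rather than any conceptual difficulty.
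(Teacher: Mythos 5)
Your plan is correct, and it reaches the stated equations by a genuinely different route from the paper. You work entirely with function fields: fix the lifts $\sigma(x,y)=(-x,y)$, $\tau(x,y)=(1/x,y/x^4)$, $\iota(x,y)=(x,-y)$ (your fixed-point count does pin down the signs correctly, since $f(\pm1)=(a+2)(b+2)$ and $f(\pm i)=(2-a)(2-b)$ are nonzero under the standing hypotheses), and for each subgroup $H$ exhibit generators $\xi,\eta$ of $\CC(x,y)^H$ and the relation $\eta^2=f(x)r(x)^2$ rewritten in $\xi$. I verified the representative cases: $\xi=x^2$, $\eta=y$ or $xy$ give $E_\sigma$ and $C_{\iota\sigma}$; $\xi=x+1/x$ with $\eta=y/x^2$ resp.\ $y(x^2-1)/x^3$ give $E_\tau$ and $C_{\iota\tau}$ via $f(x)/x^4=(\xi^2+a-2)(\xi^2+b-2)$ and $(x^2-1)^2/x^2=\xi^2-4$; $\xi=x-1/x$ handles $E_{\sigma\tau}$ and $C_{\iota\sigma\tau}$; and $\xi=x^2+1/x^2$ with $\eta=y(x^2\pm1)/x^3$ or $y(x^4-1)/x^4$ produces $(\xi+a)(\xi+b)$ times $(\xi+2)$, $(\xi-2)$, or $(\xi^2-4)$ for the three order-four quotients, matching the displayed list. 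The paper instead argues geometrically: it identifies the Weierstrass points of each quotient as images of the appropriate fixed loci ($\Fix(\iota)\cup\Fix(h)$), transports them through the coordinate changes $x\mapsto x^2$ and $x\mapsto x+1/x$ on the base $\PP^1$'s, and writes the hyperelliptic equation with those branch points (reusing the $t_1,t_2$ parametrisation and the equation of $C_{\iota\tau}$ already obtained in Proposition \ref{mainpeq}). The two methods use the same invariant coordinates downstairs; your version has the mild advantage of producing the literal equation (including the $y$-coordinate normalisation) by pure algebra, at the cost of the sign bookkeeping you already flag, while the paper's version is shorter because it only needs to track branch points, the equation over $\CC$ being determined by them up to isomorphism. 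The two remaining verifications you list (that $\xi,\eta$ generate the fixed field, by a degree count, and that the roots are distinct, from $a,b\neq\pm2$, $a\neq b$) are indeed routine, so there is no gap.
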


\begin{proof}
    In the proof of Proposition \ref{mainpeq} we found the equation of $C_{\iota\tau}$ over $\mathbb{P}^1/{\overline{\tau}}$. We want to find its equation over $\mathbb{P}^1$. The isomorphism $\varphi: \mathbb{P}^1/{\overline{\tau}} \to \mathbb{P}^1$ is given by $\varphi([x:1]) = [x + \frac{1}{x}:1]$ for $x \neq 0$ and $\varphi([0:1]) = [1:0]$, hence the defining equation of $C_{\iota\tau}$ over $\mathbb{P}^1$ is given by $$y^2 = (x^2 - 4)\left(x^2 - \left(t_1 + \frac{1}{t_1}\right)^2\right)\left(x^2 - \left(t_2 + \frac{1}{t_2}\right)^2\right) = (x^2 - 4)(x^2 + a - 2)(x^2 + b - 2).$$

    From the commutative diagram below we see that the Weierstrass points on $E_{\sigma}$ are the images under $\pi_{\sigma}$ of points in $\Fix(\iota) \cup \Fix(\iota\sigma) = \Fix(\iota)$ since $\iota\sigma$ is fixed-point free. 
\begin{equation*}
\begin{tikzcd}[row sep=huge]
C \arrow[r,"\iota"] \arrow[d,swap,"\pi_{\sigma}"] & C \arrow[d,swap,"\pi_{\sigma}"]
\\
E_{\sigma} \arrow[r,"\iota_{C}"] & E_{\sigma}
\end{tikzcd}
\end{equation*}
Thus, $E_{\sigma}$ has the following defining equation over $\mathbb{P}^1/{\overline{\sigma}}:$
$$y^2 = (x - t_1)\left(x - \frac{1}{t_1}\right)(x - t_2)\left(x - \frac{1}{t_2}\right).$$ Transforming the equations by $\varphi: \mathbb{P}^1/{\overline{\sigma}} \to \mathbb{P}^1$ with $\varphi([x:y]) = [x^2:y]$ we get the equation $$y^2 = (x - t_1^2)\left(x - \frac{1}{t_1^2}\right)(x - t_2^2)\left(x - \frac{1}{t_2^2}\right) = (x^2 + ax + 1)(x^2 + bx + 1).$$
Analogously, the defining equations of $E_{\tau}$ and $E_{\sigma\tau}$ are given by $$y^2 = \left(x^2 - \left(t_1 + \frac{1}{t_1}\right)^2\right)\left(x^2 - \left(t_2 + \frac{1}{t_2}\right)^2\right) = (x^2 + a - 2)(x^2 + b - 2)$$ and $$y^2 = \left(x^2 - \left(t_1 - \frac{1}{t_1}\right)^2\right)\left(x^2 - \left(t_2 - \frac{1}{t_2}\right)^2\right) = (x^2 + a + 2)(x^2 + b + 2)$$
respectively.

The Weierstrass points of $C_{\iota\sigma}$ are the images under $\pi_{\iota\sigma}$ of points in $\Fix(\iota) \cup \Fix(\sigma)$. That is, $$W(C_{\iota\sigma}) = 
\left\{\pi_{\iota\sigma}((t_1, 0)), \pi_{\iota\sigma}\left(\left(\frac{1}{t_1}, 0\right)\right), \pi_{\iota\sigma}((t_2, 0)), \pi_{\iota\sigma}\left(\left(\frac{1}{t_2}, 0\right)\right), \pi_{\iota\sigma}((0, 1)), \pi_{\iota\sigma}(\infty)\right\}.$$ Hence, the defining equation of $C_{\iota\sigma}$ is $$y^2 = x(x - t_1^2)\left(x - \frac{1}{t_1^2}\right)(x - t_2^2)\left(x - \frac{1}{t_2^2}\right) = x(x^2 + ax + 1)(x^2 + bx + 1).$$
Analogously, $$W(C_{\sigma\tau}) = \{\pi_{\sigma\tau}((t_1,0)), \pi_{\sigma\tau}((t_2,0)), \pi_{\sigma\tau}((- t_1,0)),
\pi_{\sigma\tau}((- t_2,0)), \pi_{\sigma\tau}((i, y_0)), \pi_{\sigma\tau}((-i, y_0))\},$$ where $y_0$ is such that $y_0^2 = (2 - a)(2 - b)$. It follows that the defining equation of $C_{\sigma\tau}$ is $$y^2 = (x^2 + 4)\left(x^2 - \left(t_1 - \frac{1}{t_1}\right)^2\right)\left(x^2 - \left(t_2 - \frac{1}{t_2}\right)^2\right) = (x^2 + 4)(x^2 + a + 2)(x^2 + b + 2).$$

One can find equations of remaining elliptic curves using the fact that they are quotients of $C_{\iota\tau}$ and $C_{\iota\sigma\tau}$. For instance, since $E_{\iota\sigma, \iota\tau} = C_{\iota\tau}/\iota\sigma$, the Weierstrass points of $E_{\iota\sigma, \iota\tau}$ are $-a + 2, -b + 2, \infty, 4$. Applying the projective transformation translating these points by $-2$ we get the defining equation  of $E_{\iota\sigma, \iota\tau}$ $$y^2 = (x + a)(x + b)(x + 2).$$ One finds equations of the remaining elliptic curves in a similar way. 

\end{proof}

\begin{rem}
    
    The equation of $\Tilde{C}$ can be found in \cite{10.1145/860854.860904} and the equations of $E_{\sigma}, E_{\tau}$ and $E_{\sigma\tau}$ can be found in \cite{eq}.
\end{rem}

We can describe the correspondence established in Theorem \ref{moduli} in terms of equations. This will help us to describe an involution of the moduli space in a convenient form (see Proposition \ref{inv}).

\begin{prop}
\label{mod}
Let $\Tilde{C}$ be a hyperelliptic genus 3 curve defined by the equation $$y^2 = (x^4 + ax^2 + 1)(x^4 + bx^2 + 1).$$ Then the 5-tuple of points with a distinguished triple on the projective line corresponding to $\Tilde{C}$ is $$([-a:1], [-b:1]; [1:0], [2:1], [-2:1]).$$ 

Conversely, given a 5-tuple of points with a distinguished triple on the projective line, we can find a projective transformation to a 5-tuple of the form $$([-a:1], [-b:1]; [1:0], [2:1], [-2:1])$$ for some $a,b \in \mathbb{C}$. Then the curve $\Tilde{C}$, corresponding to the 5-tuple is given by the equation $$y^2 = (x^4 + ax^2 + 1)(x^4 + bx^2 + 1).$$
\end{prop}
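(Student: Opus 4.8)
The plan is to run the construction that precedes Theorem \ref{moduli} directly in the coordinates fixed by Proposition \ref{mainpeq}, in which $\overline{\sigma}([x:y]) = [-x:y]$, $\overline{\tau}([x:y]) = [y:x]$, and the eight Weierstrass points of $\tC$ lie over $x \in \{\pm t_1, \pm t_1^{-1}, \pm t_2, \pm t_2^{-1}\}$ with $a = -t_1^2 - t_1^{-2}$ and $b = -t_2^2 - t_2^{-2}$. Everything then reduces to computing the $8{:}1$ quotient map $h\circ\pi_{\tC} = g\circ f\colon \tC \to \tC/\langle\sigma,\tau,\iota\rangle\simeq\PP^1$ and tracking where the distinguished points go.

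First I would pin down $h$. Since $h$ is induced by the Klein four-group $\langle\overline{\sigma},\overline{\tau}\rangle$ acting on the hyperelliptic line $\tC/\iota$, I would check that $v = x^2 + x^{-2}$ is invariant under both $\overline{\sigma}$ and $\overline{\tau}$ and generates the field of invariants, so that $h([x:1]) = [x^4+1:x^2]$. Feeding the fixed loci $\Fix(\overline{\sigma}) = \{0,\infty\}$, $\Fix(\overline{\tau}) = \{1,-1\}$ and $\Fix(\overline{\sigma\tau}) = \{i,-i\}$ through $h$ yields the triple $[p] = [1:0]$, $[q] = [2:1]$, $[r] = [-2:1]$. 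For the branch points, the orbit $\{t_1, -t_1, t_1^{-1}, -t_1^{-1}\}$ collapses to $v = t_1^2 + t_1^{-2} = -a$, and the second orbit to $-b$; because $g([w]) = h(\pi_{\tC}(w))$ by the commutative square, the two marked points are $[-a:1]$ and $[-b:1]$. This establishes the asserted 5-tuple.

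For the converse I would invoke that $\mathrm{PGL}_2(\CC)$ acts $3$-transitively, so any distinguished triple can be carried to $\{[1:0],[2:1],[-2:1]\}$; the two remaining points then receive coordinates $[-a:1],[-b:1]$, and the associated curve is $y^2 = (x^4+ax^2+1)(x^4+bx^2+1)$ by the first part together with the bijection of Theorem \ref{moduli}. The step needing the most care is the well-definedness of this converse: the transformation sending the triple to the standard one is unique only up to the order-six stabiliser of $\{[1:0],[2:1],[-2:1]\}$, which permutes the roles of $\sigma,\tau,\sigma\tau$ and hence moves $(a,b)$. I would resolve this by checking that each such permutation corresponds to an isomorphism of the curve; for example the swap $[2:1]\leftrightarrow[-2:1]$ is realised by $v\mapsto -v$, i.e.\ $(a,b)\mapsto(-a,-b)$, and the substitution $x\mapsto ix$ exhibits an isomorphism between the two equations. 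Verifying that the whole stabiliser acts through such curve isomorphisms is the only genuinely non-formal point.
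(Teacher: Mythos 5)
Your proof is correct and follows essentially the same route as the paper's: both compute the quotient map $h([x:y])=[x^4+y^4:x^2y^2]$ explicitly, evaluate it on the fixed loci of $\overline{\sigma},\overline{\tau},\overline{\sigma\tau}$ and on the Weierstrass orbits $\{\pm t_i,\pm t_i^{-1}\}$ to obtain the 5-tuple, and reduce the converse to projective equivalence of 5-tuples. Your extra discussion of the order-six stabiliser of the standard triple is a refinement the paper omits (it bears only on uniqueness of $(a,b)$, which the proposition does not claim), and your sample verification via $x\mapsto ix$ is correct.
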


\begin{proof}
    Recall that we have a commutative diagram 
    $$\begin{tikzcd}
\Tilde{C} \arrow[dd, "\pi_{\Tilde{C}}"'] \arrow[rr, "f"] &  & \mathbb{P}^1 \arrow[dd, "g"] \\
                                                           &  &                                \\
\mathbb{P}^1 \arrow[rr, "h"]                             &  & \mathbb{P}^1                  
\end{tikzcd}$$
    One can see from the fact that $\overline{\sigma}([x:y]) = [-x:y]$ and $\overline{\tau}([x:y]) = [y:x]$ that the map $h$ is given by $h([x:y]) = [x^4 + y^4: x^2y^2]$ (here we use notation from Proposition \ref{mainpeq}). Also note that \begin{align*}
        \Fix(\overline{\sigma}) &= \{[1:0], [0:1]\}, \\
        \Fix(\overline{\tau}) &= \{[1:1], [-1:1]\}, \\
        \Fix(\overline{\sigma}) &= \{[i:1], [-i:1]\}. 
    \end{align*}
    Thus, the 5-tuple we get is $$(h([t_1:1]), h([t_2:1]); h([1:0]), h([1:1]), h([i:1]) = ([-a:1], [-b:1]; [1:0], [2:1], [-2:1]).$$
    The second part of the proposition follows from the fact that the 5-tuple with distinguished triple is defined up to projective equivalence.
\end{proof}

In the following proposition we find the involution on the moduli space corresponding to the involution exchanging points in the fibres of the Prym map in Theorem \ref{thmPrym2}.

\begin{prop}
\label{inv}
    Let 
    \begin{multline}
    \label{j}
        \varphi: \mathcal{R}^{H}_{1,4} \ni ([-a:1], [-b:1]; [1:0]; [2:1], [-2:1]) \dashmapsto\\ ([-b:1], [-a:1]; [1:0]; [-2-a-b:1], [-2:1]) \in \mathcal{R}^{H}_{1,4}
    \end{multline} be the rational involution on $\mathcal{R}^{H}_{1,4}$. Here we modify the notation from Proposition \ref{mod} to indicate, that $[1:0]$ is the distinguished point in the triple.
    
    Then $\Pr(T) = \Pr(\varphi(T))$ whenever $\varphi$ is defined on $T \in \mathcal{R}^{H}_{1,4}$. 
\end{prop}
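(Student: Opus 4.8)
The plan is to reduce the statement to an explicit comparison of the two Prym varieties built from the equations of Proposition \ref{eqns}, organised around the two pieces of data that pin down a point of $\mathcal{E}_2(2,2)$: the unordered pair of complementary elliptic curves, and the $2$-torsion gluing between them. Since $\varphi$ keeps $[1:0]$ as the distinguished point of the triple, it fixes the quotient $E_\sigma$ used to form the Prym, so both $\Pr(T)=E_\tau\boxplus E_{\sigma\tau}$ and $\Pr(\varphi(T))$ are Pryms of the same shape, and it suffices to show that these two $(1,2)$-polarised surfaces are isomorphic.

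First I would fix a normal form for $\varphi(T)$. Using Proposition \ref{mod}, I bring $([-b:1],[-a:1];[1:0];[-2-a-b:1],[-2:1])$ back to a standard $5$-tuple by the unique affine transformation $\psi$ fixing $\infty=[1:0]$ and carrying $\{-2-a-b,-2\}$ to $\{2,-2\}$; explicitly $\psi(x)=-\tfrac{4}{a+b}(x+2)-2$, which also fixes $-2$. Tracking the pair $\{-a,-b\}$ through $\psi$ produces the new parameters $(a',b')$ of $\varphi(T)$, satisfying
\[
a'+b'=\frac{16}{a+b},\qquad a'-b'=\frac{4(a-b)}{a+b},\qquad a'b'=\frac{64-4(a-b)^2}{(a+b)^2}.
\]

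Next I would check that the two elliptic components are preserved. By Proposition \ref{eqns} each of $E_\tau,E_{\sigma\tau}$ (and their $\varphi(T)$-counterparts) has the form $y^2=(x^2+P)(x^2+Q)$, whose $j$-invariant is a function of the single symmetric quantity $\lambda+\lambda^{-1}=\tfrac{2(P^2+6PQ+Q^2)}{(P-Q)^2}$ of its four branch points. Writing this in terms of $s=a+b$, $p=ab$ with $P=a\mp 2,\ Q=b\mp 2$ gives $\tfrac{2(s^2\mp 16s+4p+32)}{s^2-4p}$; substituting the relations above for $(a',b')$ one finds the identical value, so $E_\tau\cong E_\tau'$ and $E_{\sigma\tau}\cong E_{\sigma\tau}'$. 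Hence $\Pr(T)$ and $\Pr(\varphi(T))$ carry the same pair of complementary elliptic curves.

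The main obstacle is the final step, matching the gluing: a point of $\mathcal{E}_2(2,2)$ is \emph{not} determined by its complementary pair (the forgetful map to $\mathrm{Sym}^2\mathcal{A}_1$ is finite of degree $>1$, as the $\Phi$-construction in Lemma~2.2 shows), so the polarisation must also be tracked. Here I would invoke Lemma \ref{lemintersections}: the surface $E_\tau\boxplus E_{\sigma\tau}$ is $(E_\tau\times E_{\sigma\tau})/\langle(R,-R)\rangle$, where $\langle R\rangle=E_\tau\cap E_{\sigma\tau}\subset J\tC$ is a canonical $2$-torsion point. I would identify $R$ on each component with the branch point singled out by the distinguished quotient $E_\sigma$, and use the fact that $\psi$ fixes the distinguished point of the triple to conclude that the isomorphisms $E_\tau\cong E_\tau'$, $E_{\sigma\tau}\cong E_{\sigma\tau}'$ carry the gluing subgroup to the gluing subgroup, yielding a polarised isomorphism $\Pr(T)\cong\Pr(\varphi(T))$. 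Equivalently and more cleanly, I expect the smoothest route is to assemble a period matrix for $E_\tau\boxplus E_{\sigma\tau}$ from the periods of the two elliptic curves together with the gluing datum (as promised in the final section) and verify directly that it is invariant under $\varphi$; this packages the component matching and the gluing into one computation and disposes of the polarisation, which is the crux.
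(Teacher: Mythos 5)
Your proposal is correct and follows essentially the same route as the paper's proof: normalise the image $5$-tuple by an affine map fixing $[1:0]$ and $[-2:1]$ (the paper uses the matrix $\begin{pmatrix}4 & 8+2a+2b\\ 0 & -a-b\end{pmatrix}$, which is exactly your $\psi$), match $E_\tau$ with $E_\tau'$ and $E_{\sigma\tau}$ with $E_{\sigma\tau}'$ by equality of $j$-invariants, and then match the gluing $2$-torsion. The only divergence is in that last step, which you leave as a sketch with two alternative strategies: the paper resolves it by identifying the gluing point as the kernel of the \'etale covering $E_\tau\to E_{\tau,\iota\sigma}$ (resp. $E_{\sigma\tau}\to E_{\iota\sigma,\iota\tau}$) and checking, via the explicit equations of Proposition \ref{eqns}, that these quotient elliptic curves are also preserved by $\varphi$ --- a concrete instance of your ``track the distinguished $2$-torsion point'' plan.
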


\begin{proof}
    Let $\tilde{C}/E_{\sigma}$ and $\tilde{C}'/E_{\sigma}'$ be the coverings associated to the 5-tuple $$([-a:1], [-b:1]; [1:0]; [2:1], [-2:1]),$$ and $$([-b:1], [-a:1]; [1:0]; [-2-a-b:1], [-2:1])$$ respectively. Note that the latter 5-tuple is projectively equivalent to $$([-a:1], [-b:1]; [1:0]; [2:1], [2 - a - b:1]).$$
    By looking at 2-torsion points we obtain $E_{\iota\sigma,\tau}$ is isomorphic to $E_{\iota\sigma, \tau}'$ and 
    $E_{\iota\sigma, \iota\tau}$ is isomorphic to $E_{\iota\sigma, \iota\tau}'$.
    
    Applying the projective transformation given by $\begin{pmatrix} 4 & 8 + 2a + 2b \\ 0 & - a - b \end{pmatrix}$ to the 5-tuple $([-b:1], [-a:1]; [1:0]; [-2-a-b:1], [-2:1])$ we have another description of $\varphi$
    \begin{multline}
        \varphi([-a:1], [-b:1]; [1:0], [2:1], [-2:1]) =\\ \left(\left[-\frac{2b - 2a - 8}{a + b}:1\right],\left[-\frac{2a - 2b - 8}{a + b}:1\right]; [1:0]; [2:1], [-2:1]\right). 
    \end{multline}
    One can check that $j$-invariants of $E_{\tau}$ (resp. $E_{\sigma\tau}$) and $E_{\tau}'$ (resp. $E_{\sigma\tau}'$) are equal, hence the respective curves are isomorphic. 

    It follows that the two-torsion points $e_1$ and $e_1'$ defining the coverings $E_{\tau} \to E_{\iota\sigma, \tau}$ and $E_{\tau}' \to E_{\iota\sigma, \tau}'$ are the same (similarly for another pair of covers), hence $$P(\Tilde{C}/E_{\sigma}) = E_{\tau} \times E_{\sigma\tau}/\langle e_1 + f_1
    \rangle = E_{\tau}' \times E_{\sigma\tau}'/\langle e_1' + f_1' \rangle = P(\Tilde{C}'/E_{\sigma}')$$ which shows the claim. 
\end{proof}

\begin{rem}
\label{318}
    Note that if $b = -a$ the curves $E_{\tau}$ and $E_{\sigma\tau}$ are isomorphic (see Prop \ref{eqns}), hence on such curves $\Pr$ has degree one (see Remark \ref{ramif}) and $\varphi$ is not defined on such covers. 
    
    Moreover, one can compute that for $b \neq -a$ the 5-tuples $T$ and $\varphi(T)$ are not projectively equivalent, hence top curves are not isomorphic and the degree of the Prym map is precisely two at such points.
\end{rem}

\subsection{Period matrix of $P(\Tilde{C}/E_{\sigma})$}

Let us compute a period matrix of $P(\Tilde{C}/E_{\sigma})$. We know that $P(\Tilde{C}/E_{\sigma}) = (E_{\tau} \times E_{\sigma\tau})/\langle e_1 + f_1 \rangle$. Possibly after a symplectic transformation, we can assume that small period matrices of $E_{\tau}$ and $E_{\sigma\tau}$ are given by $[z_1]$ and $[z_2]$ respectively with $e_1 + f_1$ given by $\begin{bmatrix}
    1 \\ 1
\end{bmatrix}$ (note that the polarization on the product is $(2,2)$).

\begin{prop}
    With the assumptions above, a (big) period matrix of $P(\Tilde{C}/E_{\sigma})$ is $$\begin{bmatrix}
        z_1 & z_1 & 1 & 0 \\
        z_1 & z_1 + z_2 & 0 & 2
    \end{bmatrix}$$ The embeddings of $E_{\tau}$ and $E_{\sigma\tau}$ are given by $x \mapsto \begin{bmatrix}
    x \\ x
\end{bmatrix}$ and $x \mapsto \begin{bmatrix}
    0 \\ x
\end{bmatrix}$ respectively.
\end{prop}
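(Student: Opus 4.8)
The plan is to realise $P(\tC/E_\sigma)$ explicitly as a quotient of the $(2,2)$-polarised product $(E_\tau\times E_{\sigma\tau},\cO(2)\boxtimes\cO(2))$ and to track a symplectic basis of its period lattice through two successive changes of basis. In the chosen symplectic coordinates, reflecting the given small period matrices $[z_1],[z_2]$ and the type-$(2)$ polarisation on each factor, the product has big period matrix
\[
\Pi_0=\begin{bmatrix} z_1 & 0 & 2 & 0\\ 0 & z_2 & 0 & 2\end{bmatrix},
\]
so its lattice $\Lambda_0$ has symplectic basis $\lambda_1=(z_1,0)^\top,\ \lambda_2=(0,z_2)^\top,\ \mu_1=(2,0)^\top,\ \mu_2=(0,2)^\top$, where the alternating form $\omega$ of the polarisation satisfies $\omega(\lambda_i,\mu_j)=2\delta_{ij}$ and all remaining pairings vanish. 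With this normalisation the $2$-torsion point $e_1+f_1=\tfrac12\mu_1+\tfrac12\mu_2$ has representative $(1,1)^\top\in\CC^2$, matching the hypothesis, and $P(\tC/E_\sigma)=\CC^2/\Lambda$ with $\Lambda=\Lambda_0+\ZZ\,(1,1)^\top$; the quotient map is a polarised isogeny of degree $2$.

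Since $2\,(1,1)^\top=\mu_1+\mu_2\in\Lambda_0$, one may discard $\mu_2$ and take $\{\lambda_1,\lambda_2,\mu_1,(1,1)^\top\}$ as a $\ZZ$-basis of $\Lambda$. Evaluating the induced alternating form $\omega$ on this basis is a short calculation; its Pfaffian has absolute value $2$, confirming that the induced polarisation has type $(1,2)$, in agreement with $\ee\subset\cA_2(1,2)$. This is the only place where the isogeny degree really enters.

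Finally I would perform the two normalising transformations. The linear automorphism $M=\begin{bmatrix}1&0\\1&1\end{bmatrix}$ of $\CC^2$ sends the first factor $x\mapsto(x,0)^\top$ to $x\mapsto(x,x)^\top$ and fixes the second factor $x\mapsto(0,x)^\top$; these are exactly the embeddings in the statement, and the Gram matrix of $\omega$ in the basis $M\lambda_1,M\lambda_2,M\mu_1,M(1,1)^\top$ coincides with the one already computed. It then remains to choose an integral basis change $N\in\mathrm{SL}_4(\ZZ)$ of $M\Lambda$ bringing this Gram matrix to the standard shape $\begin{bmatrix}0&D\\-D&0\end{bmatrix}$ with $D=\mathrm{diag}(1,2)$. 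A direct search yields such an $N$ (fixing the first basis vector $(z_1,z_1)^\top$), and the columns of the resulting period matrix are $(z_1,z_1)^\top,\ (z_1,z_1+z_2)^\top,\ (1,0)^\top,\ (0,2)^\top$, i.e. the asserted matrix. One checks at the end that this is in normalised form: its right $2\times2$ block equals $D$, while the left block $\begin{bmatrix}z_1&z_1\\z_1&z_1+z_2\end{bmatrix}$ is symmetric with positive definite imaginary part.

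The main obstacle is the bookkeeping in this last step: the coordinate change $M$ and the lattice change $N$ must be arranged simultaneously, so that the embeddings come out precisely as stated while the polarisation lands in standard form. Once the Gram matrix of $\omega$ on the enlarged lattice $\Lambda$ is written down, this is a finite linear-algebra problem over $\ZZ$; the delicate points are verifying that $\det N=1$ and that the symplectic relations hold exactly, rather than merely up to a scalar.
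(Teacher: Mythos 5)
Your proposal is correct and follows essentially the same route as the paper: both start from the $(2,2)$-polarised period matrix of $E_\tau\times E_{\sigma\tau}$, adjoin the half-period $(1,1)^\top$ to the lattice, pass to a new symplectic basis of the enlarged lattice, and apply the shear $\begin{bmatrix}1&0\\1&1\end{bmatrix}$ to normalise the embeddings. The only difference is bookkeeping order (the paper changes the symplectic basis before quotienting, you enlarge the lattice first and then search for the integral basis change $N$, which indeed exists with $\det N=1$), so the two arguments coincide in substance.
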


\begin{proof}
   A big period matrix of a $(2,2)$-polarized product $E_{\tau} \times E_{\sigma\tau}$ is $$M = \begin{bmatrix}
        z_1 & 0 & 2 & 0 \\
        0 & z_2 & 0 & 2
    \end{bmatrix},$$ and we denote the column vectors of this matrix as $f_1, f_2, e_1, e_2$. Note that $f_1' = f_1$, $f_2' = f_2 - f_1$, $e_1' = e_1 + e_2$, $e_2' = e_2$ is a symplectic basis with respect to $e^{\mathcal{O}_E(2) \boxtimes \mathcal{O}_F(2)}$. Writing $M$ in the new basis and taking quotient by $e_1'/2$ we get the period matrix of $P(C/E_{\tau})$: $$M' = \begin{bmatrix}
        z_1 & z_1 & 1 & 0 \\
        0 & z_2 & -1 & 2
    \end{bmatrix}.$$ Finally, applying symplectic transformation of the form $\begin{bmatrix}
    1 & 0 \\ 1 & 1
    \end{bmatrix}$ we get $$\begin{bmatrix}
        z_1 & z_1 & 1 & 0 \\
        z_1 & z_1 + z_2 & 0 & 2
    \end{bmatrix}.$$ One easily checks that embeddings of the curves $E_{\tau}$ and $E_{\sigma\tau}$ are given by $x \mapsto \begin{bmatrix}
    x \\ x
\end{bmatrix}$ and $x \mapsto \begin{bmatrix}
    0 \\ x
\end{bmatrix}$. 
\end{proof}

We would like to conclude the paper with the following problems.
\begin{enumerate}
    \item Is it possible to naturally compactify the domain, codomain and the Prym map from Theorem \ref{thmPrym2}?
    \item The investigated moduli space (Theorem \ref{moduli}) is of dimension 2. Therefore, one can expect that if three elliptic curves satisfy some relation there should exists a hyperelliptic genus 3 curve that is a covering of these three elliptic curves. Can one find such an explicit relation?    
    \item We have been able to compute period matrices of Prym varieties. Is it possible to use them to construct explicit period matrices of completely decomposable Jacobians of hyperelliptic genus 3 curves? Note that our construction lives in the boundary of a family considered in \cite{EF}.  
\end{enumerate}

\bibliographystyle{alpha}
\bibliography{sample}

\textsc{P. Bor\'owka, Institute of Mathematics, Jagiellonian University in Krak\'ow, Poland}\\
\textit{email address:} pawel.borowka@uj.edu.pl

\textsc{A. Shatsila, Institute of Mathematics, Jagiellonian University in Krak\'ow, Poland}\\
\textit{email address:} anatoli.shatsila@student.uj.edu.pl.

\end{document}